\UseRawInputEncoding
\documentclass[12pt,reqno]{article}

\usepackage{amsfonts}
\usepackage{mathrsfs}

\usepackage{amsmath,amsthm,amsfonts,amssymb,latexsym,mathrsfs,color}

\usepackage[colorlinks=true,
linkcolor=webblue, filecolor=webbrown, citecolor=webred ]{hyperref}
\definecolor{webblue}{rgb}{0,.5,0}
\definecolor{webred}{rgb}{0,.5,0}
\definecolor{webbrown}{rgb}{.6,0,0}

\newtheorem{thm}{Theorem}[section]
\newtheorem{lem}[thm]{Lemma}

\newtheorem{prop}[thm]{Proposition}
\newtheorem{conj}[thm]{Conjecture}
\newcommand{\qbinom}[2]{{#1\brack#2}}
\newtheorem{cla}{Claim}

\newtheorem{de}{Definition}

\theoremstyle{definition}

\newtheorem{rem}[thm]{Remark}

\numberwithin{equation}{section}

\numberwithin{equation}{section}
\newcommand{\qbinomd}[3]{\genfrac{[}{]}{0pt}{}{#1}{#2}_{q^{#3}}}
\newcommand{\qint}[1]{[#1]_{q}}
\newcommand{\Z}{\mathbb Z}
\newcommand{\Q}{\mathbb Q}

\newcommand{\wt}{wt}
\newcommand{\rad}{rad}

\title{Proof of a nonnegativity conjecture of Prellberg--Stanton\thanks{This work was supported in part by the National Key R\&D Program of China (No.~2024YFA1013903) and National Natural Science Foundation of China (No.~12471310).}}
\author{ Hong-Zhang Ai, Lili Mu, Bao-Xuan Zhu\footnote{Corresponding author\newline\hspace*{5mm}
{\it Email address:} {aihz@jsnu.edu.cn (H.-Z. Ai), lilimu@jsnu.edu.cn (L.L. Mu),  bxzhu@jsnu.edu.cn (B.-X. Zhu)}}}
\date{\footnotesize
School of Mathematics and Statistics, Jiangsu Normal University,
Xuzhou 221116, P.R. China}

\begin{document}

\maketitle

\begin{abstract}
In 1994, Friedman, Joichi and Stanton conjectured that coefficients of the power-series $q+(1-q)\prod_{i=n}^{2n-1}\frac{1}{1-q^{i}}$ are nonnegative
for $n\geq3$ and $n$ being odd. Let $\qbinom nm$ denote the Gaussian polynomial. In 2003, Prellberg and Stanton conjectured that coefficients of the power-series
$(1-q)\qbinom nm\prod_{i=0}^{m-1}\frac{1}{1-q^{i+m}}$ are nonnegative
for $0<m<n$, except when $n$ is even and $m\in\{2,n-2\}$. The latter conjecture implies the former.
In this paper, we prove a stronger result concerning coefficientwise nonnegativity of another power series, which implies the conjecture of Prellberg and Stanton. The proof uses a M\"obius correction from Gaussian polynomials, weight-preserving injections on partitions, explicit linearly independent vectors in the kernel of a derivation operator and the KOH identity.
\bigskip\\ 
{\sl \textbf{2020 MSC}:}\quad 05A17; 05A20; 05A30
\bigskip\\
{\sl \textbf{Keywords}:}\quad Nonnegativity; Gaussian polynomial; partition; positivity; M\"obius correction; KOH identity
\end{abstract}
\tableofcontents
\section{Introduction}
Throughout this paper, let \(q\) be an indeterminate. Recall the notation
\begin{equation}\label{def+Gauss+poly}
 \qint{n}=\frac{1-q^n}{1-q},\qquad (a;q)_n=\prod_{i=0}^{n-1}(1-aq^{i}),\qquad  \qbinom nk=\frac{(q;q)_n}{(q;q)_k(q;q)_{n-k}}.
\end{equation}
The last expression is the Gaussian polynomial \cite[Section~1.7]{StanleyEC1}.
For two formal power series \(A(q)\) and \(B(q)\), we write \(A(q)\geq_q B(q)\) when every coefficient
of the formal power series \(A(q)-B(q)\) is nonnegative, and let
\(A_k=[q^k]A(q)\) with \(A_k=0\) for \(k<0\).

In the theory of partitions and $q$-series, it is a basic problem to determine the signs of coefficients of the power-series. 
A prescribed sign pattern often hints some combinatorial insight, in particular, encoding partition inequalities.
For instance, Borwein Conjecture \cite{And93} says that coefficients of the formal power-series
$(q;q^3)_n(q^2;q^3)_n$
have the repeating sign pattern $+,-,-$, with zero allowed in either sign class. It can be reformulated as the existence of an injection from partitions. 
Wang~\cite{Wang2022} proved this conjecture by analytic methods.
In \cite{FJS1994}, Friedman, Joichi and Stanton made the following conjecture.
\begin{conj}\label{conj+FJS}
Let $n\geq3$ and $n$ is odd. Then all coefficients of the formal power series $q+\frac{1-q}{(q^n;q)_n}$ are nonnegative.
\end{conj}
The nonnegativity in  Conjecture \ref{conj+FJS} implies increasing
partition functions on the set $A=\{n,n+1,\ldots,2n-1\}$ \cite{FJS1994}.
 For $n\leq99$, Conjecture \ref{conj+FJS} was checked by a computer proof (see \cite{FJS1994,Han}). Andrews \cite{Andrews} showed Conjecture \ref{conj+FJS} for prime values of $n$. In 2003, Prellberg and Stanton \cite{PS2003} confirmed Conjecture \ref{conj+FJS} using an
identity for the rational function. Prellberg and Stanton noted a further identity:
\begin{equation}\label{eq:PS-decomposition}
 \frac{1-q}{(q^n;q)_n}+q
 =\frac{1}{1-q^{n(2n-1)}}
 \left(1-q^{n(2n-1)+1}
 +\sum_{m=1}^{n-1}q^{m(m+n-1)}
 \qbinom{n}{m}\frac{1-q}{(q^n;q)_m}\right),
\end{equation}
which would also imply Conjecture \ref{conj+FJS} if the individual terms are nonnegative.
Thus, they posed the following nonnegativity conjecture.

\begin{conj}\cite{PS2003}\label{conj+PS}
Let \(n,m\) be integers with \(n>0\) and \(0<m<n\). Then
\[
 \qbinom{n}{m}\frac{1-q}{(q^n;q)_m}\geq_q0
\]
if $n$ is odd, or  even and
$m\notin\{2,n-2\}$.
\end{conj}
Twenty years have passed, Conjecture \ref{conj+PS} is still open. In this paper, we prove the following stronger result for nonnegativity.
\begin{thm}\label{thm:strong}
Let $m$ and $n$ be positive integers. If $3\le m\le n-3$, then 
$$\qbinom nm
       \frac{1-q}{(1-q^n)(1-q^{n+1})}\ge_q0.$$
\end{thm}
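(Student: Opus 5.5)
Write $G(q)=\qbinom nm=\sum_k g_kq^k$, a symmetric unimodal polynomial of degree $N=m(n-m)$. The statement asks for nonnegativity of the coefficients of
\[
F(q)=\frac{(1-q)G(q)}{(1-q^n)(1-q^{n+1})}.
\]
Let $d_k=g_k-g_{k-1}$ be the coefficients of $(1-q)G(q)$. Expanding $1/((1-q^n)(1-q^{n+1}))=\sum_{a,b\ge0}q^{an+b(n+1)}$ gives
\[
[q^j]F=\sum_{a,b\ge 0} d_{\,j-an-b(n+1)} .
\]
Grouping by $s=a+b$ turns the exponent into $sn+b$ with $0\le b\le s$. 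The inner sum over $b$ telescopes:
\[
\sum_{b=0}^{s} d_{j-sn-b}=g_{j-sn}-g_{j-sn-s-1}.
\]
Hence
\[
[q^j]F=\sum_{s\ge0}\bigl(g_{j-sn}-g_{j-sn-s-1}\bigr).
\]
The proof then reduces to showing that this alternating-type sum is nonnegative for every $j$, given $3\le m\le n-3$.

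\textbf{Step 1: the range where all terms are nonnegative.} By unimodality and symmetry, $g_x\ge g_y$ whenever $|x-N/2|\le|y-N/2|$. So each term $g_{j-sn}-g_{j-sn-s-1}$ is nonnegative as long as $j-sn\le N/2$, or more generally when $j-sn$ is at least as close to the center as $j-sn-s-1$. The negative terms occur only for indices $s$ where the window $[j-sn-s-1,\,j-sn]$ straddles or lies past the center. Since consecutive windows are shifted by $n$ and $N\approx m(n-m)$, only $O(m)$ such windows matter.

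\textbf{Step 2: pairing negative against positive terms.} For a term with $x=j-sn>N/2$, use symmetry $g_x=g_{N-x}$. The deficit is then $g_{x-s-1}-g_x$, a difference of $g$ at two points on the decreasing side. I would bound this deficit by the positive excess from small $s'$, where $j-s'n$ is far down on the increasing side and the difference $g_{j-s'n}-g_{j-s'n-s'-1}$ is large. Concretely, I would want inequalities of the type
\[
g_{k}-g_{k-t}\ \ge\ g_{k'}-g_{k'-t'}
\]
for suitable $k<k'$ and $t\le t'$, i.e.\ strong unimodality statements for the Gaussian coefficients.

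\textbf{Step 3: the key inequalities via KOH.} The needed inequalities concern increments of $\qbinom nm$. The KOH identity writes $\qbinom nm$ as a sum of products of Gaussian polynomials that are each symmetric and unimodal about the same center $N/2$. This lets me prove statements such as: the sequence $(1-q)\qbinom nm$ has positive part dominating shifted negative parts. The route is either partition injections (partitions in an $m\times(n-m)$ box, with an injection adding cells) or the strict unimodality results derived from KOH (Pak--Panova/Zanello type bounds $g_k-g_{k-1}\ge1$ for $m,n-m\ge 8$ or so). The hypothesis $3\le m\le n-3$ is what makes the small-$m$ exceptions disappear. The cases $m=2$ and $m=n-2$ fail because $\qbinom n2$ has plateaus, i.e.\ zero increments.

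\textbf{Main obstacle.} The hard part is the uniform comparison in Step 2: the tail sum of deficits near and beyond the center must be controlled by the strict increments on the rising side, for all $j$ simultaneously. My approach would be to reorganize it as
\[
F=\frac{1-q}{1-q^{n+1}}\cdot\frac{G}{1-q^n}.
\]
I would then show $G/(1-q^n)$ is, modulo a nonnegative correction, a series whose coefficients are nondecreasing in steps of $n+1$. That reduces the claim to a statement about $\qbinom nm$ reduced modulo $q^n-1$, handled by a Möbius-type inversion over cyclic residues together with an explicit injection of partitions in the box. For small $n$, say $n\le 12$, I would supplement the argument with a direct computer check.
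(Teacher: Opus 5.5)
Your telescoped formula
\[
[q^j]F=\sum_{s\ge0}\bigl(g_{j-sn}-g_{j-sn-s-1}\bigr)
\]
is correct, but it is only a restatement. The entire content of the theorem is the nonnegativity of this sum, and the proposal never establishes it.

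Step~2 does not specify which positive terms pay for which negative ones. It also points the wrong way: the windows lying on the rising side are those with large $s$, not small $s'$. Step~3 only names tools. The one concrete input you mention, strict unimodality $g_k-g_{k-1}\ge1$, is far too weak quantitatively. A single deficit $g_{x-s-1}-g_x$ beyond the center is a sum of $s+1$ decrements, each of which can be large. The closing suggestion (reduce $G/(1-q^n)$ modulo $q^n-1$, apply ``a M\"obius-type inversion over cyclic residues'' and some injection) never says what is inverted or why the result is nonnegative.

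Your explanation of the excluded cases shows that the governing mechanism is missing. $\qbinom{n}{2}$ has plateaus for every $n$, yet $G_{n,2}=\frac{1-q^{n-1}}{(1-q^2)(1-q^{n+1})}$ is nonnegative for odd $n$, while $[q^{n-1}]G_{n,2}=-1$ for even $n$. What separates the two parities is arithmetic. The series $\frac{1-q}{1-q^n}\qbinom{n}{m}$ has poles exactly at the nontrivial roots of unity whose order divides $\gcd(n,m)$. Inequalities comparing windows of the coefficient sequence of $\qbinom{n}{m}$ do not detect this.

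The idea you are missing is the identity $\frac{1-q}{1-q^n}\qbinom{n}{m}=\qbinom{n-1}{m-1}\big/\qint{m}$. It gives $G_{n,m}=P(q)/\bigl(\qint{m}(1-q^{n+1})\bigr)$ with $P=\qbinom{n-1}{m-1}$, and by symmetry one may take $m\le n/2$.

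If $\gcd(n,m)=1$, then $\qint{m}\mid P$ and the quotient is palindromic. Its lower-half coefficients are $\sum_{j\ge0}[q^{k-jm}](1-q)P(q)$, and each term is nonnegative by unimodality of $P$. No comparison between different windows is needed.

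If $r=\gcd(n,m)>1$, the paper repairs exactly the bad roots of unity with the M\"obius correction $R(q)=-\sum_{d\mid r,\,d>1}\mu(d)\qbinomd{n/d-1}{m/d-1}{d}$. With this, $Q=(P-\qint{n+1}R)/\qint{m}\in\mathbb Z[q]$ and $G_{n,m}=R/(1-q^m)+Q/(1-q^{n+1})$. The paper then proves three separate facts:
\begin{itemize}
\item $R\ge_q0$, via a weight-preserving injection that splits a part;
\item $W=(R-S)/(1-q^m)\ge_q0$, where $S(q)=q^{\deg P-n}R(q^{-1})$;
\item $[q^k](1-q)P\ge[q^k]R$ for $0\le k\le\frac12\deg P$, via explicit linearly independent vectors in $\ker\mathcal D$, plus the KOH identity when $m=6$.
\end{itemize}
Your sketch contains neither the correction nor any of these three inequalities, so as it stands it does not prove the theorem.
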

As a corollary of Theorem \ref{thm:strong}, we confirm Conjecture \ref{conj+PS}.

\begin{thm}\label{thm:main}
Let $n$ and $m$ be integers with $0<m<n$. Then \[
 \qbinom{n}{m}\frac{1-q}{(q^n;q)_m}\geq_q0
\] if $n$ is odd, or  even and
$m\notin\{2,n-2\}$.
\end{thm}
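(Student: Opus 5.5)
We will deduce Theorem~\ref{thm:main} from Theorem~\ref{thm:strong} by splitting into the generic range $3\le m\le n-3$ and a few boundary cases. The basic observation is that multiplying a series with nonnegative coefficients by $1/(1-q^k)$ keeps the coefficients nonnegative.

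For $3\le m\le n-3$ and $m\ge 2$ we have
\[
\qbinom nm\frac{1-q}{(q^n;q)_m}=\qbinom nm\frac{1-q}{(1-q^n)(1-q^{n+1})}\prod_{i=2}^{m-1}\frac{1}{1-q^{n+i}}.
\]
The first factor is $\ge_q0$ by Theorem~\ref{thm:strong}, and each remaining factor $1/(1-q^{n+i})$ is a series with nonnegative coefficients. Hence the product is $\ge_q0$. This part of the argument does not depend on the parity of $n$.

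The boundary values are $m\in\{1,2,n-2,n-1\}$, together with small $n$ where the range $3\le m\le n-3$ is empty.

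\textbf{Case $m=1$.} Here $\qbinom n1\frac{1-q}{1-q^n}=1$, which is trivially nonnegative.

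\textbf{Case $m=n-1$.} Using $\qbinom n{n-1}=\qint n$, we get
\[
\qint n\frac{1-q}{(q^n;q)_{n-1}}=\prod_{i=1}^{n-2}\frac{1}{1-q^{n+i}}\ge_q0.
\]

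\textbf{Case $m=2$, $n$ odd.} Write
\[
\qbinom n2\frac{1-q}{(1-q^n)(1-q^{n+1})}=\frac{1-q^{n-1}}{(1-q^2)(1-q^{n+1})}.
\]
Since $n-1$ is even, $(1-q^{n-1})/(1-q^2)=1+q^2+\cdots+q^{n-3}$ is a polynomial with nonnegative coefficients. Dividing it by $1-q^{n+1}$ keeps the coefficients nonnegative, so this case holds. When $n$ is even the same expression has negative coefficients, which is exactly the exceptional case $m=2$.

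\textbf{Case $m=n-2$, $n$ odd.} We have $\qbinom n{n-2}=\qbinom n2$, so
\[
\qbinom n2\frac{1-q}{(q^n;q)_{n-2}}=\frac{(1-q^{n-1})}{(1-q^2)}\prod_{i=1}^{n-3}\frac{1}{1-q^{n+i}},
\]
where one factor $1-q^n$ cancels. Again $(1-q^{n-1})/(1-q^2)$ is a nonnegative polynomial because $n$ is odd, and the product is a nonnegative series. When $n-2\le 1$, this case is covered by $m=1$.

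These cases exhaust all $0<m<n$, including small $n$. We only need Theorem~\ref{thm:strong} for $3\le m\le n-3$, and this is exactly where it applies.

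The real difficulty lies in Theorem~\ref{thm:strong} itself. The corollary reduces to routine algebraic checks; the only points to watch are the parity condition in the cases $m=2$ and $m=n-2$ and the correct cancellation of the $1-q^n$ factor.
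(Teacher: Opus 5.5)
Your proposal is correct and follows the paper's proof. The interior range $3\le m\le n-3$ comes from Theorem~\ref{thm:strong} by multiplying by the nonnegative factors $1/(1-q^{n+i})$, and the boundary values $m\in\{1,2,n-2,n-1\}$ are handled separately. The only difference is that you check the boundary cases by explicit cancellation (e.g.\ $(1-q^{n-1})/(1-q^2)=1+q^2+\cdots+q^{n-3}$ for odd $n$) instead of citing Lemma~\ref{lem:coprime}. This is more elementary, and in the case $m=n-2$ it correctly keeps the extra factors $\prod_{i}1/(1-q^{n+i})$, which the paper's line $F_{n,n-2}=G_{n,n-2}$ drops (harmlessly, since those factors are nonnegative).
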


For convenience, define
\begin{equation}\label{def+Fnm+Gnm}
 F_{n,m}(q)=\qbinom nm\frac{1-q}{(q^n;q)_m},
 \qquad
 G_{n,m}(q)=\qbinom nm
       \frac{1-q}{(1-q^n)(1-q^{n+1})}
\end{equation}
when $n,m\in\Z$ and $0<m<n$. So, for $m\ge2$, we have
\begin{equation}\label{eq:f-to-g}
 F_{n,m}(q)=\frac{G_{n,m}(q)}{(q^{n+2};q)_{m-2}}.
\end{equation}
Obviously, for two formal power series $A(q)$ and $B(q)$, $A(q)B(q)\geq_q0$ if $A(q)\geq_q0$ and $B(q)\geq_q0$. 
Every additional reciprocal factor has nonnegative coefficients, so
by (\ref{eq:f-to-g}), Theorem~\ref{thm:strong} implies Theorem~\ref{thm:main} in the interior range.
The boundary cases are elementary and are included in
Section~\ref{sec:completion}; they also show that the stated exceptions are
necessary.

To describe the obstruction of the proof, set $P(q)=\qbinom{n-1}{m-1}$. By (\ref{def+Gauss+poly}) and (\ref{def+Fnm+Gnm}), we have
\begin{equation}\label{eq:Gnm}
  G_{n,m}(q)=\frac{P(q)}{\qint m(1-q^{n+1})}.
\end{equation}

Let $\gcd(n,m)$ denote the greatest common divisor of integers $n$ and $m$. For $\gcd(n,m)=1$, the quotient $P(q)/\qint m$ is a polynomial with nonnegative
coefficients (see Andrews \cite[Theorem~2]{Andrews} and
Haiman \cite[Section~2.5]{Haiman}). We include a proof of this case in Lemma~\ref{lem:coprime} for
completeness. The central issue is the case $\gcd(n,m)>1$. In such case, the polynomial $P(q)$ does not vanish at all the roots of
$\qint m$, so the same argument does not apply. In the noncoprime interior
range, we first use the symmetry of Gaussian polynomials to reduce $m\in[3, n-3]$ to
$m\in[3, n/2]$ and construct a polynomial $R(q)$ (see (\ref{def+R})) in terms of Gaussian polynomials and the M\"obius inversion, called {\it the M\"obius correction}, whose values at the nontrivial \(m\)-th roots of
unity remove the obstruction to dividing a Gaussian polynomial by
\(\qint{m}\) and make
\begin{equation}\label{eq:Q+poly}
Q(q):= \frac{P(q)-\qint{n+1}R(q)}{\qint m}\in\Z[q].
\end{equation}
Then by (\ref{eq:Gnm}) and (\ref{eq:Q+poly}), we have
\begin{equation}\label{eq:G-to-R+Q}
 G_{n,m}(q)=\frac{R(q)}{1-q^m}+\frac{Q(q)}{1-q^{n+1}}.
\end{equation}

  In Section \ref{sec:reduction}, we reduce the desired positivity $G_{n,m}(q)\geq_q0$ to three coefficient positivity inequalities:
$R(q)\ge_q0$, $W(q)\ge_q0$ and $[q^k]((1-q)P(q)-R(q))\geq0$ for $0\le k\le\delta/2$, where $W(q)$ is another polynomial. In order to prove $R(q)\geq_q0$, in Section \ref{sec:correction-positive}, we first rewrite $R(q)$ in terms of partitions (see (\ref{def+Rq+new})). In order to account for the signs
in $R(q)$, we introduce a splitting partition transformation (\ref{eq:partition-split}) of partition and show it to be a weight-preserving injection. 
The injection implies positivity of the coefficients of $R(q)$. For $(1-q)P(q)\ge_q R(q)$, we need coefficient comparisons. We first use the kernel formula to present $[q^k](1-q)P(q)$ and introduce a method to construct linear independent kernel vectors from partitions in Section \ref{sec:kernel}. Then, in Section \ref{sec:domination}, in order to prove $(1-q)P(q)\ge_q R(q)$, it suffices to construct linear independent vectors and use the KOH identity \cite{OHara1990,Zanello2015,Zeilberger1989}. The process needs splitting partition transformation (\ref{eq:partition-split}) and the construction method for linear independent kernel vectors in Section \ref{sec:kernel}. We arrange the proof of $W(q)\geq_q0$ in Section~\ref{sec:reciprocal-positive} and the proofs of main results in Section~\ref{sec:completion}.

\section{Gaussian polynomials and explicit kernel vectors}
\label{sec:kernel}

For $n\in\Z_{\ge0}$, a partition of $n$ is a
sequence $\lambda=(\lambda_1,\ldots,\lambda_t)$ of integers
satisfying $\sum_{i=1}^{t} \lambda_i=n$ and
$\lambda_1\geq\lambda_2\geq\cdots\geq\lambda_t\geq 1$ when $t>0$.
If $\lambda$ is a partition of $n$, then we write $\lambda\vdash n$ and define its length
and weight by
\[
 \ell(\lambda)=t,\quad
 \wt(\lambda)=\sum_{j=1}^{t}\lambda_j.
\]
The unique partition of $0$ is the empty sequence, which has length
and weight zero. We put $\lambda_j=0$ for $j>\ell(\lambda)$.
Repeated parts may be recorded by exponents: for example, $(i^t)$
denotes the partition with $t$ parts being $i$.

Let integers $a\ge1$ and $b\ge0$.  We write
\[
 \mathcal G_{a,b}(q)=\qbinom{a+b}{b}.
\]
From the partition interpretation of Gaussian polynomials
\cite[Proposition~1.7.3]{StanleyEC1}, we obtain for every integer $k\ge0$ that the coefficient
\begin{equation}\label{eq:coeff+Gauss}
 [q^k]\mathcal G_{a,b}(q)
 =\bigl|\{\lambda\vdash k:\ell(\lambda)\le b,\ \lambda_1\le a\}\bigr|.
\end{equation}
It follows from the symmetry of Gaussian polynomials \cite[Section~1.7]{StanleyEC1} that
\begin{equation}\label{eq:kernel-reciprocity}
 q^{ab}\mathcal G_{a,b}(q^{-1})=\mathcal G_{a,b}(q).
\end{equation}

The unimodality of Gaussian polynomials has been proved by several
methods; see,
for example, \cite{ChenJia2022} for an account of the classical
Cayley--Sylvester kernel formula, Stanley's proof~\cite{Stanley1980} in terms of the hard Lefschetz theorem, O'Hara's constructive
proof~\cite{OHara1990}, Zeilberger's KOH identity~\cite{Zeilberger1989}, Kronecker coefficients of Pak and Panova~\cite{PakPanova2014,PakPanova2017,PakPanovaVallejo2016}, and so on.

\subsection{The kernel formula}

In this subsection, we will give a self-contained proof of
the kernel formula, which in fact recovers the unimodality and
fixes the operators and normalization used in the subsequent
constructions. The explicit kernel vectors constructed below will yield
the stronger coefficient bounds required in
Section~\ref{sec:domination}.

Fix a positive integer $a$ and work in $\Q[x_0,\ldots,x_a]$.
For $\boldsymbol{\alpha}=(\alpha_0,\ldots,\alpha_a)\in\Z_{\ge0}^{a+1}$, set
$\textbf{x}^{\boldsymbol{\alpha}}=\prod_{i=0}^{a}x_i^{\alpha_i}$.
Its degree is $\sum_{i=0}^{a}\alpha_i$, and its
index weight is $\sum_{i=0}^{a}i\alpha_i$.
For $b\in\Z_{\ge0}$ and $k\in\Z$, define
\[
 V_{b,k}
 =\operatorname{span}_{\Q}
 \left\{\textbf{x}^{\boldsymbol{\alpha}}:
    \sum_{i=0}^{a}\alpha_i=b,\quad
    \sum_{i=0}^{a}i\alpha_i=k\right\}.
\]
The span of the empty set is understood to be $\{0\}$; in
particular, $V_{b,k}=\{0\}$ for $k<0$ or $k>ab$.
The displayed monomials form a basis of $V_{b,k}$. Each monomial
$\textbf{x}^{\boldsymbol{\alpha}}\in V_{b,k}$ determines a partition with
exactly $\alpha_i$ parts of size $i$ for each $1\le i\le a$. It has weight $k$,
at most $b$ parts, and all parts at most $a$. Conversely, such a
partition determines a monomial with $\alpha_0=b-\ell(\lambda)$. So this
correspondence is a bijection. In consequence, by the coefficient formula (\ref{eq:coeff+Gauss}),
we have
\begin{equation}\label{eq:kernel-dimension}
 \dim_{\Q}V_{b,k}=[q^k]\mathcal G_{a,b}(q).
\end{equation}

Following \cite{ChenJia2022}, we define derivations of
$\Q[x_0,\ldots,x_a]$ by
\begin{eqnarray}
 \mathcal D&=&\sum_{i=1}^{a}i x_{i-1}\frac{\partial}{\partial x_i},\quad\quad
 \mathcal E=\sum_{i=0}^{a-1}(a-i)x_{i+1}\frac{\partial}{\partial x_i},\\
 \mathcal H&=&\sum_{i=0}^{a}(a-2i)x_i\frac{\partial}{\partial x_i},
 \end{eqnarray}
where $x_{-1}=x_{a+1}=0$. It is obvious that these three derivations are linear operators.
Form these definitions, it is easy to show
$\mathcal D(V_{b,k})\subseteq V_{b,k-1}$ and
$\mathcal E(V_{b,k})\subseteq V_{b,k+1}$.
For each generator $x_i$, computation gives
\[\mathcal D\mathcal Ex_i-
\mathcal E\mathcal Dx_i=
 \bigl((a-i)(i+1)-i(a-i+1)\bigr)x_i=(a-2i)x_i.
\]
Therefore  we have
\begin{eqnarray}
 \mathcal D\mathcal E-\mathcal E\mathcal D=\mathcal H.
\end{eqnarray}
Moreover, for every basis monomial $x^{\boldsymbol{\alpha}}\in V_{b,k}$, we also obtain
\begin{eqnarray}
 \mathcal H\,\textbf{x}^{\boldsymbol{\alpha}}
 &=&\left(\sum_{i=0}^{a}(a-2i)\alpha_i\right)\textbf{x}^{\boldsymbol{\alpha}}
 =(ab-2k)\textbf{x}^{\boldsymbol{\alpha}}.
\end{eqnarray}

We now use these operators to give the kernel formula, which also implies the unimodality of Gaussian
polynomials.

\begin{lem}\label{lem:kernel}
Let $a$ and $b$ be positive integers. For every integer $k\in [0,\ ab/2]$, we have
\begin{eqnarray}\label{formal+kernel}
 \dim_{\Q}\ker\bigl(\mathcal D:V_{b,k}\longrightarrow V_{b,k-1}\bigr)
 &=&[q^k](1-q)\mathcal G_{a,b}(q).
\end{eqnarray}
In particular, $\mathcal G_{a,b}(q)$ is unimodal.
\end{lem}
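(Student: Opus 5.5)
The plan is to show that $\mathcal D:V_{b,k}\to V_{b,k-1}$ is surjective for $1\le k\le ab/2$. Then rank–nullity together with (\ref{eq:kernel-dimension}) gives
\[
\dim\ker\mathcal D=\dim V_{b,k}-\dim V_{b,k-1}=[q^k]\mathcal G_{a,b}(q)-[q^{k-1}]\mathcal G_{a,b}(q)=[q^k](1-q)\mathcal G_{a,b}(q).
\]
The case $k=0$ is immediate, since $V_{b,-1}=\{0\}$ and $\dim V_{b,0}=1$. Unimodality follows at once: the right-hand side is a dimension, hence nonnegative, for $0\le k\le ab/2$. So the coefficients increase up to the middle, and by the symmetry (\ref{eq:kernel-reciprocity}) they decrease afterwards.

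Surjectivity will be proved by the standard $\mathfrak{sl}_2$ argument using $[\mathcal D,\mathcal E]=\mathcal H$ and the fact that $\mathcal H$ acts on $V_{b,k}$ by the scalar $h_k:=ab-2k$. On the finite-dimensional space $V_b=\bigoplus_k V_{b,k}$, I will first establish an \emph{injectivity} statement: $\mathcal E:V_{b,k-1}\to V_{b,k}$ is injective whenever $k-1<ab/2$, i.e. $h_{k-1}>0$. Dualising this gives surjectivity of $\mathcal D$. To avoid talking about adjoints, I will instead work directly with $\mathcal D$. Equivalently, I will show that $\mathcal D$ restricted to $\mathcal E(V_{b,k-1})$ is already onto $V_{b,k-1}$, which is stronger than needed.

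Concretely, I will show that the composite $\mathcal D\mathcal E$ on $V_{b,j}$ is invertible for all $j$ with $h_j>0$, i.e. $j<ab/2$; taking $j=k-1\le ab/2-1$ then gives the surjectivity of $\mathcal D$. The approach is induction on $j$ downward from the top, or upward from $j=0$, using the commutator identity iterated:
\[
\mathcal D\mathcal E^{r}=\mathcal E^{r}\mathcal D+\sum_{s=0}^{r-1}\mathcal E^{s}\mathcal H\mathcal E^{r-1-s}=\mathcal E^{r}\mathcal D+r\,(h-r+1)\,\mathcal E^{r-1}
\]
on a space where $\mathcal H=h$. The cleanest route is the following. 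Decompose $V_{b,j}$ via the filtration by $\mathcal E$-images of kernels: every $v\in V_{b,j}$ can be written as $v=\sum_{r\ge0}\mathcal E^r u_r$ with $u_r\in\ker\mathcal D\cap V_{b,j-r}$. This is proved by induction on $j$, using the fact that $\mathcal E\mathcal D$ acts on $\mathcal E^r u$ by $r(h_{j-r}-r+1)$. The identity above then shows that $\mathcal D\mathcal E$ acts on $\mathcal E^r u_r$ as multiplication by $(r+1)(h_{j-r}-r)$, and $h_{j-r}-r=ab-2j+r>0$ when $h_j>0$. Hence $\mathcal D\mathcal E$ is diagonal with positive eigenvalues, and so invertible.

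The main obstacle is justifying the decomposition $V_{b,j}=\sum_r\mathcal E^r(\ker\mathcal D\cap V_{b,j-r})$ without circularity. To handle it, I will argue by induction on $j$. Given $v\in V_{b,j}$, if $\mathcal Dv=0$ we are done. Otherwise, by induction $\mathcal Dv=\sum_r\mathcal E^r w_r$ with $w_r\in\ker\mathcal D\cap V_{b,j-1-r}$. I then subtract $\sum_r c_r^{-1}\mathcal E^{r+1}w_r$, where $c_r=(r+1)(h_{j-1-r}-r)$, to land in $\ker\mathcal D$. This requires $c_r\ne0$, i.e. $h_{j-1-r}\ne r$. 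Such a coefficient can vanish only at the boundary, where $\mathcal E^{r+1}w_r=0$ anyway, since the corresponding $\mathfrak{sl}_2$-string ends there. This is the step I expect to require care, checking it via nilpotency of $\mathcal E$ on the finite-dimensional $V_b$ and the standard lowest-weight string argument.
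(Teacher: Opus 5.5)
Your argument is correct, but it takes a different route from the paper.

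\textbf{The paper's route.} It proves surjectivity of $\mathcal D:V_{b,k}\to V_{b,k-1}$ by positivity. It puts an explicit weighted inner product on monomials (weights $\prod_i\alpha_i!\,(i!(a-i)!/a!)^{\alpha_i}$) and checks on monomials that $\mathcal E$ is adjoint to $\mathcal D$. Then for $y\perp\operatorname{im}\mathcal D$ it gets $0=\langle\mathcal D\mathcal E y,y\rangle=\|\mathcal D y\|^2+(ab-2k+2)\|y\|^2$, which forces $y=0$.

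\textbf{Your route.} You use no inner product and instead run the algebraic $\mathfrak{sl}_2$ string argument. You use only three facts: $\mathcal D\mathcal E-\mathcal E\mathcal D=\mathcal H$, the scalar action of $\mathcal H$ by $ab-2j$ on $V_{b,j}$, and $V_{b,j}=\{0\}$ for $j<0$. From these you build $V_{b,j}=\sum_r\mathcal E^r(\ker\mathcal D\cap V_{b,j-r})$ inductively, and $\mathcal D\mathcal E$ acts on the $r$-th piece by $(r+1)(ab-2j+r)>0$.

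\textbf{What each buys.} The paper's proof is shorter once the adjointness computation is done, but it depends on that ad hoc weighting and its verification. Yours needs no positivity and gives more. The scalars $(r+1)(ab-2j+r)$ are distinct, so the sum is direct. You therefore get the full highest-weight decomposition and injectivity of $\mathcal E$ below the middle. This is the same mechanism the paper later uses when it builds $g_\ell$ from $\mathcal D\mathcal E f_{\ell-1}=\mathcal H f_{\ell-1}$.

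\textbf{The step you flag as delicate never arises.} You only need the decomposition for $j\le k-1<ab/2$. The induction reaching such $j$ passes only through levels $j'\le j$. At level $j'$ your coefficient is
\[
c_r=(r+1)\bigl(h_{j'-1-r}-r\bigr)=(r+1)(ab-2j'+2+r)\ge(r+1)(r+2)>0 .
\]
So restrict the induction to $0\le j<ab/2$ and drop the boundary discussion entirely. This matters, because the justification you sketched for the vanishing case would not work. If some $c_r$ were $0$, knowing $\mathcal E^{r+1}w_r=0$ only tells you the correction term you would subtract is zero. The term $\mathcal E^r w_r$ would still sit in $\mathcal D(v-v')$, so $v-v'$ need not lie in $\ker\mathcal D$. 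You would have to show that this term itself vanishes, which essentially requires complete reducibility.
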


\begin{proof}
We will first prove that (\ref{formal+kernel}) holds. For $k=0$, $[q^0](1-q)\mathcal G_{a,b}(q)=1$ and it follows from $V_{b,0}=\Q x_0^b$ and $\mathcal D x_0^b=0$ that
the dimension of the kernel is one. So (\ref{formal+kernel}) is obvious for $k=0$. In the following it suffices to prove that
$\mathcal D:V_{b,k}\to V_{b,k-1}$ is surjective for
$1\le k\le ab/2$. Indeed, combining surjectivity and
\eqref{eq:kernel-dimension} yields
\[
 \begin{aligned}
 \dim\ker\mathcal D
 &=\dim V_{b,k}-\dim V_{b,k-1}\\
 &=[q^k]\mathcal G_{a,b}(q)-[q^{k-1}]\mathcal G_{a,b}(q)\\
 &=[q^k](1-q)\mathcal G_{a,b}(q).
 \end{aligned}
\]

We now present the proof of the required surjectivity. In the following proof,
we extend the domain $\mathbb Q[x_0,\ldots,x_a]$ of $\mathcal D,\mathcal E,\mathcal H$ and $V_{b,k}$ onto $\mathbb R[x_0,\ldots,x_a]$.
Since the corresponding matrix of the linear operator $\mathcal D$ acting on monomial bases has rational entries, this extension does not change its
rank and kernel dimension. Thus surjectivity and the dimension of its kernel are unaffected by this
change of scalars.

For any $\textbf{x}^{\boldsymbol{\alpha}}\in V_{b,k}$ and $\textbf{x}^{\boldsymbol{\beta}}\in V_{b,k}$, define a bilinear inner product by
\[
 \langle \textbf{x}^{\boldsymbol{\alpha}},\textbf{x}^{\boldsymbol{\beta}}\rangle
 :=\begin{cases}
 \displaystyle\left(\prod_{i=0}^{a}\alpha_i!\right)
   \prod_{i=0}^{a}\left(\frac{i!(a-i)!}{a!}\right)^{\alpha_i},
   &\boldsymbol{\alpha}=\boldsymbol{\beta},\\
 0,&\boldsymbol{\alpha}\ne\boldsymbol{\beta}.
 \end{cases}
\]
Obviously, this inner product is nonnegative.
We write $\|f\|^2=\langle f,f\rangle$.
We first give the next claim.

\begin{cla} The inner product $\langle\cdot,\cdot\rangle$ on $V_{b,k}$ makes $\mathcal E$ adjoint to $\mathcal D$, i.e.,
\begin{eqnarray}\label{inner+product+adjoint}
\langle\mathcal D f,g\rangle=\langle f,\mathcal E g\rangle
\end{eqnarray}
for all $f,g$ in $V_{b,k}$. 
\end{cla}

\begin{proof}
To obtain this identity, we will assign positive
weights to the monomial basis. Let $\textbf{x}^{\boldsymbol{\alpha}}\in V_{b,k}$ be an arbitrary monomial with $\alpha_i>0$
for $1\le i\le a$. And assume that
$\boldsymbol{\beta}=\boldsymbol{\alpha}-\boldsymbol{e}_i+\boldsymbol{e}_{i-1}$, where $\boldsymbol{e}_i$ is the coordinate
vector corresponding to $x_i$. Then in terms of the definition of the inner product $\langle\cdot,\cdot\rangle$, we obtain
\begin{eqnarray}\label{form+two+vectors}
\frac{\|\textbf{x}^{\boldsymbol{\beta}}\|^2}{\|\textbf{x}^{\boldsymbol{\alpha}}\|^2}
 =\frac{\alpha_{i-1}+1}{\alpha_i}\frac{a-i+1}{i},
\end{eqnarray}
In addition, it follows from the definitions of the operator $\mathcal D$ and the inner product $\langle\cdot,\cdot\rangle$ that
\begin{eqnarray}
\langle\mathcal D \textbf{x}^{\boldsymbol{\alpha}},\textbf{x}^{\boldsymbol{\beta}}\rangle
 &=&i\,\alpha_i\,\|\textbf{x}^{\boldsymbol{\beta}}\|^2,\label{inner+product+adjoint+1}\\
 \langle \textbf{x}^{\boldsymbol{\alpha}},\mathcal E \textbf{x}^{\boldsymbol{\beta}}\rangle
 &=&(a-i+1)(\alpha_{i-1}+1)\,\|\textbf{x}^{\boldsymbol{\alpha}}\|^2.\label{inner+product+adjoint+2}
\end{eqnarray}
Thus, by (\ref{form+two+vectors})-(\ref{inner+product+adjoint+2}), we obtain
\begin{eqnarray}
\langle\mathcal D \textbf{x}^{\boldsymbol{\alpha}},\textbf{x}^{\boldsymbol{\beta}}\rangle
 &=& \langle \textbf{x}^{\boldsymbol{\alpha}},\mathcal E \textbf{x}^{\boldsymbol{\beta}}\rangle
\end{eqnarray}
for any $\textbf{x}^{\boldsymbol{\alpha}}\in V_{b,k}$ with all $\alpha_i>0$ and $\boldsymbol{\beta}=\boldsymbol{\alpha}-\boldsymbol{e}_j+\boldsymbol{e}_{j-1}$, where $\boldsymbol{e}_j$ is the coordinate
vector corresponding to $x_j$. In addition, for any pair of monomials not related in this way, their inner
product is zero. Therefore, using the bilinearity of the inner product gives the identity (\ref{inner+product+adjoint}) for all $f,g$ in $V_{b,k}$. This prove this claim.
\end{proof}

Fix $1\le k\le ab/2$. Let $y\in V_{b,k-1}$ be orthogonal
to the image of $\mathcal D:V_{b,k}\to V_{b,k-1}$.
For every $x\in V_{b,k}$, adjointness (\ref{inner+product+adjoint}) gives
$\langle x,\mathcal E y\rangle=\langle\mathcal D x,y\rangle=0$.
Since $\mathcal E y\in V_{b,k}$, taking $x=\mathcal E y$
we have
$\langle\mathcal D\mathcal E y,y\rangle=0$.
On the other hand, using $\mathcal D\mathcal E-\mathcal E\mathcal D=\mathcal H$,
$\mathcal H y=(ab-2k+2)y$, and adjointness, we obtain
\[
 \begin{aligned}
 \langle\mathcal D\mathcal E y,y\rangle
  &=\langle(\mathcal E\mathcal D +\mathcal H) y,y\rangle\\
  &=\langle\mathcal E\mathcal D y,y\rangle
    +\langle\mathcal H y,y\rangle\\
  &=\langle\mathcal D y,\mathcal D y\rangle
    +\langle(ab-2k+2)y,y\rangle\\
  &=\|\mathcal D y\|^2+(ab-2k+2)\|y\|^2.
 \end{aligned}
\]
In consequence, we derive $\|\mathcal D y\|^2+(ab-2k+2)\|y\|^2=0$.
Here $ab-2k+2\ge2$. Thus, in terms of the nonnegativity of the inner product, we have $\|y\|^2=0$, which further implies $y=0$.
Thus $(\operatorname{im}\mathcal D)^\perp=\{0\}$ in $V_{b,k-1}$.
Since $V_{b,k-1}$ is finite-dimensional,
$\operatorname{im}\mathcal D=V_{b,k-1}$, so $\mathcal D$ is surjective.

Finally, in order to prove that the coefficient sequence of
$\mathcal G_{a,b}$ is unimodal, by \eqref{eq:kernel-reciprocity}, it suffices
to show that the coefficients are nondecreasing up to $ab/2$, which is immediate from the nonnegativity of $[q^k](1-q)\mathcal G_{a,b}(q)$ for $1\le k\le ab/2$ in the kernel formula (\ref{formal+kernel}). This completes the proof of this lemma.
\end{proof}

\subsection{Linearly independent  kernel vectors from partitions}

In this subsection, we will use partitions to construct linearly independent kernel vectors.

Let $\boldsymbol{\alpha},\boldsymbol{\beta}\in\Z_{\ge0}^{a+1}$ be distinct,
and put
$j=\max\{i\in\{0,\ldots,a\}:\alpha_i\ne\beta_i\}$.
Define the lexicographic order on the monomials of
$\Q[x_0,\ldots,x_a]$ by
\[
 \textbf{x}^{\boldsymbol{\alpha}}>\textbf{x}^{\boldsymbol{\beta}}
 \quad\Longleftrightarrow\quad \alpha_j>\beta_j.
\]
In particular, $x_a>x_{a-1}>\cdots>x_0$.
For a nonzero polynomial
$f=\sum_{\boldsymbol{\alpha}}c_{\boldsymbol{\alpha}}\textbf{x}^{\boldsymbol{\alpha}}$,
define its {\it leading monomial} to be the largest $\textbf{x}^{\boldsymbol{\alpha}}$
with $c_{\boldsymbol{\alpha}}\ne0$. If $\textbf{x}^{\boldsymbol{\alpha}}$ is this monomial,
define its {\it leading term} by
$\operatorname{LT}(f)=c_{\boldsymbol{\alpha}}\textbf{x}^{\boldsymbol{\alpha}}$.
The partition determined by the leading monomial of $f$ is called
the {\it leading partition} of $f$. In what follows we will construct linearly independent kernel vectors.

(1) When even $\ell\in [2, a]$, define $f_\ell\in V_{2,\ell}$ by
\begin{equation}\label{eq:even-vector}
 f_\ell=\sum_{i=0}^{\ell}(-1)^i\binom{\ell}{i}x_i x_{\ell-i}.
\end{equation}
It is obvious that the leading term of $f_\ell$ is $2x_0x_\ell$. We also have $\mathcal D f_\ell=0$, which follows from
\[
 \begin{aligned}
 \mathcal D f_\ell
 &={}\sum_{i=1}^{\ell}(-1)^i i\binom{\ell}{i}
       x_{i-1}x_{\ell-i}+\sum_{i=0}^{\ell-1}(-1)^i(\ell-i)\binom{\ell}{i}
       x_i x_{\ell-i-1}\\
 &={}\sum_{i=0}^{\ell-1}(-1)^i
  \left((\ell-i)\binom{\ell}{i}-(i+1)\binom{\ell}{i+1}\right)
  x_i x_{\ell-i-1}\\
  &=0.
 \end{aligned}
\]

(2) When odd $\ell\in [3, a]$, define $g_\ell\in V_{3,\ell}$ by
\begin{equation}\label{eq:odd-vector}
 g_\ell=x_0\mathcal E f_{\ell-1}
         -(2a-2\ell+2)x_1f_{\ell-1}.
\end{equation}
The only term involving $x_\ell$ in the right of (\ref{eq:odd-vector}) comes from applying
$\mathcal E$ to $x_{\ell-1}$ in the leading term
$2x_0x_{\ell-1}$ of $f_{\ell-1}$.
Therefore the leading term of $g_\ell$ is
$2(a-\ell+1)x_0^2x_\ell$ with positive coefficient. We claim $\mathcal D g_\ell=0$. In fact, in terms of $\mathcal D x_0=0$,
$\mathcal D x_1=x_0$ and $\mathcal D f_{\ell-1}=0$, computation gives
\[
 \begin{aligned}
 \mathcal D g_\ell
 &=\mathcal D(x_0\mathcal E f_{\ell-1})
   -(2a-2\ell+2)\mathcal D(x_1f_{\ell-1})\\
 &=\mathcal D(x_0)\mathcal E f_{\ell-1}+x_0\mathcal D\mathcal E f_{\ell-1}
   -(2a-2\ell+2)(\mathcal D(x_1)f_{\ell-1}+x_1\mathcal D f_{\ell-1})\\
    &=x_0\mathcal D\mathcal E f_{\ell-1}
   -(2a-2\ell+2)x_0f_{\ell-1}\\
     &=x_0(\mathcal E \mathcal D +\mathcal H)f_{\ell-1}
   -(2a-2\ell+2)x_0f_{\ell-1}\\
    &=x_0\mathcal Hf_{\ell-1}
   -(2a-2\ell+2)x_0f_{\ell-1}\\
 &=(2a-2\ell+2)x_0f_{\ell-1}
   -(2a-2\ell+2)x_0f_{\ell-1}\\
 &=0.
 \end{aligned}
\]

(3) For a partition $\lambda=(\lambda_1,\ldots,\lambda_t)$ with $\lambda_j\in \{2,\ldots,a\}$,
define
\begin{equation}\label{def+C+labta}
 C(\lambda)
 =2\bigl|\{1\le j\le\ell(\lambda):\lambda_j\text{ is even}\}\bigr|
  +3\bigl|\{1\le j\le\ell(\lambda):\lambda_j\text{ is odd}\}\bigr|.
\end{equation}

Fix an integer $b\ge0$. For each partition $\lambda$ with parts in
$\{2,\ldots,a\}$, if $C(\lambda)\le b$, then we define a polynomial $K_\lambda$ to be
\begin{equation}\label{eq:generl-vector}
 K_\lambda=
x_0^{b-C(\lambda)}
  \prod_{\substack{1\le j\le\ell(\lambda)\\\lambda_j\text{ even}}}f_{\lambda_j}
  \prod_{\substack{1\le j\le\ell(\lambda)\\\lambda_j\text{ odd}}}g_{\lambda_j}.
\end{equation}

Obviously, $K_\lambda\in V_{b,\wt(\lambda)}$. We also have
$\mathcal D K_\lambda=0$ in terms of $\mathcal D x_0=0$, $\mathcal D f_\ell=0$ for even $\ell$,
and $\mathcal D g_\ell=0$ for odd $\ell$. So
$$K_\lambda\in V_{b,\wt(\lambda)}\bigcap\ker\mathcal D.$$
If $t=\ell(\lambda)$, then its leading monomial is
\begin{equation}
  x_0^{b-t}\prod_{j=1}^{t}x_{\lambda_j}
\end{equation}
with coefficient
\begin{equation}
 2^t\prod_{\substack{1\le j\le t\\\lambda_j\text{ odd}}}
(a-\lambda_j+1)>0.
\end{equation}
Thus distinct partitions give distinct leading monomials and hence
linearly independent vectors: in any nonzero linear combination,
the largest leading monomial among the participating vectors cannot
be canceled. In particular, $K_\lambda$ has leading partition $\lambda$.
For the empty partition, this construction gives $K_{\varnothing}=x_0^b$.

When $a\ge3$, we also use the polynomial
\begin{equation}\label{eq:delta-vector}
 \Delta=x_0^2x_3^2-6x_0x_1x_2x_3
         +4x_0x_2^3+4x_1^3x_3-3x_1^2x_2^2
 \in V_{4,6}.
\end{equation}
Differentiating its five terms and collecting like monomials gives
\[
 \begin{aligned}
 \mathcal D\Delta={}&(6-6)x_0^2x_2x_3
 +(-12+12)x_0x_1^2x_3\\
 &+(-18+24-6)x_0x_1x_2^2
 +(12-12)x_1^3x_2=0.
 \end{aligned}
\]
Its leading monomial is $x_0^2x_3^2$ with coefficient one.

\section{An algebraic reduction by a M\"obius correction}
\label{sec:reduction}

In this section, we will construct a M\"obius correction $R(q)$ and use it to reduce $G_{n,m}(q)\ge_q0$ in Theorem \ref{thm:strong} to three coefficient nonnegativity inequalities in Proposition \ref{prop:reduction}.

\subsection{Construction of the M\"obius correction }
In this subsection, using the M\"obius inversion, we will construct a M\"obius correction $R(q)$ in terms of Gaussian polynomials.

Recall $P(q)=\qbinom{n-1}{m-1}$. For $\gcd(n,m)>1$, we will seek a polynomial
$R(q)\in\Z[q]$ such that
\[
 \qint m\mid \left(P(q)-\qint{n+1}R(q)\right).
\]
Since $\qint m$ is monic and its simple zeros are precisely the
nontrivial $m$th roots of unity, this is equivalent to
\[
 P(\zeta)=\left.\qint{n+1}\right|_{q\rightarrow\zeta}R(\zeta)
 \qquad(\zeta^m=1,\ \zeta\ne1).
\]
The following lemma provides the required Gaussian evaluation, which will be used to our M\"obius correction.

\begin{lem}\label{lem:root-evaluation}
Let $A,B$ and $r$ be positive integers satisfying $A\ge B$, $r>1$ and $r|B$.
If $\zeta\in\mathbb C$ be a primitive $r$th root of unity, then
\[
  \left.\qbinom{A-1}{B-1}\right|_{q\rightarrow\zeta}
  =\begin{cases}
     0,&r\nmid A,\\[2pt]
     \displaystyle\binom{A/r-1}{B/r-1},&r\mid A.
   \end{cases}
\]
\end{lem}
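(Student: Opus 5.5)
We evaluate the product formula for $\qbinom{A-1}{B-1}$ at $q=\zeta$ factor by factor, pairing numerator and denominator factors. Write
\[
 \qbinom{A-1}{B-1}=\prod_{j=1}^{B-1}\frac{1-q^{A-B+j}}{1-q^{j}}.
\]
Among $j=1,\ldots,B-1$, the denominator factor $1-\zeta^{j}$ vanishes exactly when $r\mid j$. Since $r\mid B$, there are $B/r-1$ such $j$. The numerator exponents $A-B+j$ form the $B-1$ consecutive integers $A-B+1,\ldots,A-1$. Because $r\mid B$, these are congruent modulo $r$ to $A+1-B,\dots,A-1$, i.e.\ a block of $B-1$ consecutive residues. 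Such a block contains each residue class exactly $B/r$ times except the class of $A$, which occurs $B/r-1$ times. Hence the numerator has $B/r$ vanishing factors if $r\nmid A$, and $B/r-1$ if $r\mid A$.

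To handle the zeros rigorously, I would treat the expression as a polynomial identity and take a limit $q\to\zeta$. Equivalently, group the factors as follows. The factors with $r\nmid j$ (denominator) and $r\nmid A-B+j$ (numerator) are nonzero at $\zeta$. The vanishing factors are handled with $1-q^{rs}=(1-q^r)\,[s]_{q^r}$. The cleaner route is the standard $q$-Lucas-type factorization. If $r\mid A$, write $A=rA'$ and $B=rB'$. The vanishing factors assemble into
\[
 \prod_{s=1}^{B'-1}\frac{1-q^{r(A'-B'+s)}}{1-q^{rs}}=\qbinomd{A'-1}{B'-1}{r},
\]
which at $q=\zeta$ (so $q^r=1$) equals the ordinary binomial $\binom{A'-1}{B'-1}$. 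The remaining factors form
\[
 \prod_{\substack{1\le j\le B-1\\ r\nmid j}}\frac{1-q^{A-B+j}}{1-q^{j}}.
\]
Since $A-B\equiv0\pmod r$, each numerator exponent satisfies $A-B+j\equiv j\pmod r$, so $\zeta^{A-B+j}=\zeta^{j}$ and each ratio evaluates to $1$. This gives $\binom{A/r-1}{B/r-1}$.

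If $r\nmid A$, the numerator has one more vanishing factor than the denominator. As a rational function, $\qbinom{A-1}{B-1}$ then has a zero of order at least one at $\zeta$: each vanishing factor $1-q^{e}$ with $r\mid e$ has a simple zero at the primitive root $\zeta$, and the nonvanishing factors are harmless. Since it is a polynomial, its value at $\zeta$ is $0$.

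The main obstacle is only bookkeeping: making the residue count of the consecutive block precise and justifying the evaluation of a polynomial given as a quotient. I would address the latter by noting that each $1-q^{e}$ with $r\mid e$ has exactly a simple zero at $\zeta$, because $\zeta$ is a simple root of $1-q^e$, so orders of vanishing simply add.
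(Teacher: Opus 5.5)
Your proposal is correct and follows essentially the same route as the paper. Both arguments count the vanishing factors $1-q^j$ with $r\mid j$ in numerator and denominator (the paper via floor functions, you via residue classes), conclude the value is $0$ when $r\nmid A$, and when $r\mid A$ show the nonvanishing factors contribute ratio $1$ while the vanishing ones give $\binom{A/r-1}{B/r-1}$. Your packaging of the vanishing factors as $\qbinomd{A'-1}{B'-1}{r}$ evaluated at $q^r=1$ is just a tidier form of the paper's limit computation using $1-q^{rm}=(1-q^r)[m]_{q^r}$.
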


\begin{proof}
By (\ref{def+Gauss+poly}) of Gaussian polynomials, we have
\begin{equation}\label{gass:A-1+B-1}
\qbinom{A-1}{B-1}
  =\frac{N(q)}
         {D(q)},
\end{equation}
where
\[
N(q):=\prod_{j=A-B+1}^{A-1}(1-q^j),
\quad
D(q):=\prod_{j=1}^{B-1}(1-q^j).
\]
We will evaluate this quotient (\ref{gass:A-1+B-1}) by separating the factors that vanish at $\zeta$.
Since \(\zeta\) is a primitive \(r\)-th root of unity, we have
\[
\zeta^j=1
\iff
r\mid j.
\]
Thus a factor \(1-q^j\) vanishes at \(q=\zeta\) exactly when \(r\mid j\), and every such zero is simple because
\[
\frac{d}{dq}(1-q^j)\Big|_{q\rightarrow\zeta}
=
-j\zeta^{j-1}\neq 0.
\]

For a set, let
$
Z(I):=|\{j\in I:\ r\mid j\}|
$
denote the number of multiples of \(r\) in \(I\).
We also let $B=k\,r$ since \(r\mid B\). For the denominator $D(q)$, we have
\[
Z([1,B-1])=k-1.
\]
 Hence \(D(q)\) has a zero \(q=\zeta\) of order \(k-1\). For the numerator $N(q)$, we have
\begin{equation}\label{number+zeros:A-1+B-1}
Z([A-B+1,A-1])
=
\left\lfloor \frac{A-1}{r}\right\rfloor
-
\left\lfloor \frac{A-B}{r}\right\rfloor
=
\left\lfloor \frac{A-1}{r}\right\rfloor
-
\left\lfloor \frac{A}{r}\right\rfloor
+
k.
\end{equation}

We now split into the following two cases to consider:

\medskip
\noindent
\textbf{Case 1: \(r\nmid A\).}
Then
\[
\left\lfloor \frac{A-1}{r}\right\rfloor
=
\left\lfloor \frac{A}{r}\right\rfloor,
\]
so
\[
Z([A-B+1,A-1])=k.
\]
Thus  $N(q)$ has one more zero \(q=\zeta\) than $D(q)$. Hence
we obtain
\begin{equation}
\left.\qbinom{A-1}{B-1}\right|_{q\rightarrow\zeta}=\left.\frac{N(q)}{D(q)}\right|_{q\rightarrow\zeta}=0.
\end{equation}

\medskip
\noindent
\textbf{Case 2: \(r\mid A\).} Let $A=\ell\, r$. Obviously, $\ell\geq k\geq1$ since $A\geq B$.
Then
\[
\left\lfloor \frac{A-1}{r}\right\rfloor
=
\ell-1,
\quad
\left\lfloor \frac{A}{r}\right\rfloor
=
\ell.
\]
Therefore,  (\ref{number+zeros:A-1+B-1}) gives
\[
Z([A-B+1,A-1])
=
\ell-1-\ell+k
=
k-1.
\]
So the numerator $N(q)$ and denominator $D(q)$ have the zero \(q=\zeta\) of the same order $k-1$.

The nonvanishing factors in $N(q)$ are those with \(r\nmid j\). In the interval
\[
A-B+1\le j\le A-1,
\]
there are \(B-1=k\,r-1\) indices, among which \(k-1\) are multiples of \(r\). Hence each nonzero residue class modulo \(r\) occurs exactly \(k\) times. For the
denominator, it is the same as the numerator. Therefore the products of the nonvanishing factors in \(N(q)\) and \(D(q)\) have the same value at \(q=\zeta\), and their ratio is \(1\).

It remains to compare the vanishing factors. The multiples of \(r\) in $N(q)$ correspond to
\[
j=r\, m,
\quad
m=l-k+1,l-k+2,\dots,l-1.
\]
The multiples of \(r\) in $D(q)$ correspond to
$
j=r\, m,\,
m=1,2,\dots,k-1.
$
Thus, we obtain
\begin{eqnarray*}
\left.\frac{N(q)}{D(q)}\right|_{q\rightarrow\zeta}
&=&\lim_{q\to \zeta}
\frac{
\prod_{m=l-k+1}^{l-1}(1-q^{rm})
}{
\prod_{m=1}^{k-1}(1-q^{rm})
}\\
&
=&\lim_{q\to \zeta}
\frac{
\prod_{m=l-k+1}^{l-1}(1-q^r)\left(1+q^r+q^{2r}+\cdots+q^{(m-1)r}\right)
}{
\prod_{m=1}^{k-1}(1-q^r)\left(1+q^r+q^{2r}+\cdots+q^{(m-1)r}\right)
}\\
&=&\frac{
\prod_{m=l-k+1}^{l-1} m
}{
\prod_{m=1}^{k-1} m
}\\
&
=&\binom{l-1}{k-1}.
\end{eqnarray*}

Substituting \(l=A/r\) and \(k=B/r\), we get
\[
\left.
\begin{bmatrix}
A-1\\
B-1
\end{bmatrix}\right|_{q\rightarrow \zeta}
=
\binom{A/r-1}{B/r-1}.
\]

Combining both two cases yields the proof of this lemma.
\end{proof}

For the remainder of this section, let $n,m$ be integers satisfying
\begin{equation}\label{eq:noncoprime-range}
  3\le m\le n/2,\qquad r=\gcd(n,m)>1.
\end{equation}
If $\zeta$ is a primitive $r$th root of unity, then
$\left.\qint m\right|_{q=\zeta}=0$ and Lemma~\ref{lem:root-evaluation} gives
\[
 P(\zeta)=\binom{n/r-1}{m/r-1}>0.
\]
Thus $\qint m\nmid P(q)$.

Recall the M\"obius function
$\mu:\Z_{\ge1}\to\{-1,0,1\}$ be
\[
 \mu(d)=\begin{cases}
 1,&d=1,\\
 (-1)^k,&d=p_1\cdots p_k\text{ for distinct primes }p_1,\ldots,p_k,\\
 0,&p^2\mid d\text{ for some prime }p.
 \end{cases}
\]

Now we define the M\"obius correction as follows.

\begin{de}
For each $d\mid r$ and $d>1$, let
\begin{eqnarray}
 &R_d(q)&:=\left.\begin{bmatrix}n/d-1\\ m/d-1\end{bmatrix}\right|_{q\rightarrow q^d},\label{def+Rd}\\
  &R(q)&:=-\sum_{\substack{d>1,\,d\mid r}}\mu(d)R_d(q),\quad S(q):=-\sum_{\substack{d>1 ,d\mid r}}\mu(d)q^{L_d}R_d(q),\label{def+R}\\
   &Q(q)&:=\frac{P(q)-\qint{n+1}R(q)}{\qint m},\quad W(q):=\frac{R(q)-S(q)}{1-q^m},\quad\quad
\end{eqnarray}
\end{de}
Obviously, $\deg R_d(q)=(m/d-1)(n-m)$ and $\deg P(q)=(n-m)(m-1)$. For brevity, we also let
\begin{eqnarray}
 && L_d:=\deg P(q)-n-\deg R_d(q)\quad\text{for}\, d\mid r \,\text{and}\, d>1,\\
 &&\delta:=\deg P(q)=(n-m)(m-1).
\end{eqnarray}

Since $3\leq m\leq n/2$, $d\geq2$ and $d\mid m$, a direct computation for $L_d$ gives
\begin{equation}\label{Ld+posi}
L_d
=
(m-m/d-1)(n-m)-m
\ge
m(m-m/d-2)
\ge 0,
\end{equation}
where equality holds if and only if $d=2$ and $m=4$.

In terms of the symmetry of Gaussian polynomials and $L_d+\deg R_d=\delta-n$, it is easy to obtain the reciprocity
\begin{eqnarray}
 R_d(q)&=&q^{\deg R_d(q)}R_d(q^{-1})
 \qquad(d\mid r,\ d>1),\label{symmery+Rd}\\
  S(q)&=&q^{\delta-n}R(q^{-1}).\label{symmery+R}
\end{eqnarray}

The next lemma establishes the polynomiality of $Q(q)$.

\begin{lem}\label{lem:polynomiality}
For each $d\mid r$ and $d>1$, then we have
\begin{itemize}
\item [\rm (i)]$Q(q)\in\Z[q]$ and $\deg Q(q)\leq\delta-m+1$;
\item [\rm (ii)]
$W(q)\in\Z[q]$ and  $q^{\delta-n-m}W(q^{-1})=W(q).$
\end{itemize}
\end{lem}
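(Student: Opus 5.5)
For part (i), the plan is to show that the polynomial $P(q)-\qint{n+1}R(q)$ vanishes at every nontrivial $m$th root of unity. Since $\qint m$ is monic with simple roots exactly at these points, this makes $\qint m$ divide it in $\Z[q]$.

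Fix $\zeta$, a primitive $s$th root of unity with $s\mid m$ and $s>1$.

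\emph{Case $s\nmid n$.} Lemma~\ref{lem:root-evaluation} with $A=n$, $B=m$, $r=s$ gives $P(\zeta)=0$. For each $d\mid r$ with $d>1$, the value $\zeta^d$ is a primitive $s/\gcd(s,d)$th root of unity. If $s\mid d$, then $s\mid n$, which is excluded. Hence $s'=s/\gcd(s,d)>1$, and $s'$ divides $m/d$, since $s\mid m$ implies $s/\gcd(s,d)\mid m/d$ (recall $d\mid m$). Also $s'\nmid n/d$: otherwise $s\mid n$. So Lemma~\ref{lem:root-evaluation} gives $R_d(\zeta)=0$. Therefore both sides vanish.

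\emph{Case $s\mid n$.} Then $s\mid r$. Here $\zeta^{n+1}=\zeta$, so $\qint{n+1}(\zeta)=(1-\zeta)/(1-\zeta)=1$, and we must check $P(\zeta)=R(\zeta)$.

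\begin{itemize}
\item Lemma~\ref{lem:root-evaluation} gives $P(\zeta)=\binom{n/s-1}{m/s-1}$.
\item For $d\mid r$, $\zeta^d$ is a primitive $s/\gcd(s,d)$th root of unity.
\item If $s\nmid \gcd(s,d)\cdot(\text{stuff})$ with $s'=s/\gcd(s,d)>1$: since $s'\mid n/d$ and $s'\mid m/d$, we get $R_d(\zeta)=\binom{n/(ds')-1}{m/(ds')-1}$. Note $ds'=\operatorname{lcm}(d,s)$.
\item If $s\mid d$, then $\zeta^d=1$ and $R_d(\zeta)=\binom{n/d-1}{m/d-1}$, which is the same formula with $\operatorname{lcm}(d,s)=d$.
\end{itemize}

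So in all subcases $R_d(\zeta)=c(\operatorname{lcm}(d,s))$, where $c(e)=\binom{n/e-1}{m/e-1}$. Using $\sum_{d\mid r}\mu(d)c(\operatorname{lcm}(d,s))=0$ for $s>1$, we get $-\sum_{d>1}\mu(d)R_d(\zeta)=c(s)=P(\zeta)$, as required.

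To justify that identity, group the divisors $d$ by $\gcd(d,s)$. Write $d=d_1d_2$ with $d_1$ supported on primes dividing $s$ and $d_2$ coprime to $s$. Then $\operatorname{lcm}(d,s)$ depends on $d_2$ only. For fixed $d_2$, the squarefree parts $d_1$ run over divisors of $\rad(s)$, and $\sum_{d_1\mid \rad(s)}\mu(d_1)=0$ because $s>1$. This M\"obius cancellation is the crux of part (i).

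For the degree bound, we use
\[
\deg\bigl(\qint{n+1}R\bigr)\le n+\max_d\deg R_d=\delta-\min_d L_d\le\delta.
\]
Thus the numerator of $Q$ has degree at most $\delta$, and dividing by $\qint m$, of degree $m-1$, gives $\deg Q\le \delta-m+1$.

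For part (ii), we first show that $1-q^m$ divides $R-S$. At $q=1$ we have $R(1)=S(1)$, since $R_d(1)=q^{L_d}R_d(q)|_{q=1}$. At a nontrivial $m$th root of unity $\zeta$, we need
\[
\sum_{d}\mu(d)\bigl(1-\zeta^{L_d}\bigr)R_d(\zeta)=0.
\]
Compute
\[
L_d\equiv -m/d\cdot(n-m)\cdot(\ldots)\pmod m.
\]
More cleanly, $L_d=(m-m/d-1)(n-m)-m\equiv -(m/d+1)n\pmod m$.

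It is easier to combine the reciprocity with part (i). The identity $S(q)=q^{\delta-n}R(q^{-1})$ holds, and from (i) we have $P=\qint{n+1}R+\qint mQ$. Applying $q\mapsto q^{-1}$ and multiplying by $q^{\delta}$, the symmetry of $P$ gives
\[
P=q^{\delta-n}\qint{n+1}R(q^{-1})+q^{\delta-m+1}\qint mQ(q^{-1}),
\]
that is, $P=\qint{n+1}S+\qint m\tilde Q$ with $\tilde Q(q)=q^{\delta-m+1}Q(q^{-1})\in\Z[q]$ by the degree bound. Subtracting the two expressions for $P$ gives
\[
\qint{n+1}(R-S)=\qint m(\tilde Q-Q).
\]
Since $\gcd(n+1,m)$ may not be $1$, the $q=1$ information and the roots must be handled separately.

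At a primitive $s$th root of unity with $s\mid m$, $s>1$: if $s\nmid n+1$, then $\qint{n+1}(\zeta)\ne0$, so $(R-S)(\zeta)=0$. If $s\mid n+1$, then $s\nmid n$, and the argument of (i) shows each $R_d(\zeta)=0$, so $R(\zeta)=S(\zeta)=0$. Together with $R(1)=S(1)$, the polynomial $1-q^m$ divides $R-S$, so $W\in\Z[q]$.

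Finally, for the symmetry,
\[
q^{\delta-n-m}W(q^{-1})=\frac{q^{\delta-n}\bigl(R(q^{-1})-S(q^{-1})\bigr)}{q^m-1}=\frac{S-R}{q^m-1}=W,
\]
using (\ref{symmery+R}) twice. The main obstacle is the M\"obius bookkeeping of $\operatorname{lcm}(d,s)$ in part (i).
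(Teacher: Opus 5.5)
Your proof is correct. Part (i) is essentially the paper's argument. You evaluate at a primitive $s$th root of unity $\zeta$ with $s\mid m$ and use Lemma~\ref{lem:root-evaluation} to see that $R_d(\zeta)$ vanishes when $s\nmid n$ and otherwise depends only on $\operatorname{lcm}(d,s)$. You then cancel with M\"obius. Your grouping $d=d_1d_2$ is the same cancellation as the paper's pairing $d\leftrightarrow dp$ for a prime $p\mid s$. You also treat the subcase $s\mid d$, where $\zeta^d=1$, explicitly, which the paper glosses over.

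For the polynomiality of $W$ in part (ii) your route differs from the paper's.
\begin{itemize}
\item The paper writes $W=-\sum_d\mu(d)W_d$ with $W_d=R_d(q)(1-q^{L_d})/(1-q^m)$ and shows each $W_d$ is a polynomial. When $s\nmid n$ it uses $R_d(\zeta)=0$. When $s\mid n$ it uses $s\mid L_d$, so that $\zeta^{L_d}=1$.
\item You reflect $P=\qint{n+1}R+\qint m Q$ through the palindromicity of $P$ to get $\qint{n+1}(R-S)=\qint m(\tilde Q-Q)$. This forces $R-S$ to vanish wherever $\qint{n+1}(\zeta)\neq0$. At the remaining roots ($s\mid n+1$, hence $s\nmid n$) you fall back on $R_d(\zeta)=0$.
\end{itemize}
Your route avoids any arithmetic of $L_d$ modulo $s$ and makes the divisibility a consequence of part (i). It also uses the degree bound to make $\tilde Q$ a polynomial, though that is inessential. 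It yields only the combined statement for $W$. The paper's route is independent of (i) and gives the finer fact that each $W_d$ is a palindromic polynomial.

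The congruence you wrote down and then abandoned, $L_d\equiv-(m/d+1)n\pmod m$, already gives $s\mid L_d$ whenever $s\mid n$. That is exactly the paper's argument.

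Your garbled third bullet in the case $s\mid n$ should simply read ``if $s'=s/\gcd(s,d)>1$''.
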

\begin{proof}
(i)
Set
$
N(q)=P(q)-[n+1]_q R(q).
$
We first show that \(N(q)\) vanishes at every root of \([m]_q\). Let \(\zeta\) be a root of \([m]_q\). Then \(\zeta\) is a primitive \(c\)-th root of unity for some integer \(c>1\) with \(c\mid m\).

We first compute $P(\zeta)$.
Since \[
P(q)=\begin{bmatrix}n-1\\ m-1\end{bmatrix},
\] applying Lemma~3.1 to $P(q)$ gives

\begin{equation}\label{eq:P+zata}
  P(\zeta)=
  \begin{cases}
    0,&c\nmid n,\\[2pt]
    \displaystyle \binom{n/c-1}{m/c-1},&c\mid n.
  \end{cases}
\end{equation}

Next, for each \(d\mid r\) and \(d>1\), consider
$R_d(\zeta).$
It is easy to obtain that the order of \(\zeta^d\) is
\[
c'=\frac{c}{\gcd(c,d)}.
\]
Since \(c\mid m\), \(d\mid r\) and \(r\mid m\), we have
\[
\left.c'\right|\frac{m}{d}.
\]
Applying Lemma~3.1 to \(R_d(\zeta)\)  yields
\begin{equation}\label{eq:Rd+zata+1}
  R_d(\zeta)=
  \begin{cases}
    0,&c'\nmid \frac nd,\\[2pt]
    \displaystyle \binom{\frac{n/d}{c'}-1}{\frac{m/d}{c'}-1}, & c'\mid \frac nd.
  \end{cases}
\end{equation}
Obviously, the condition \(c'\mid n/d\) is equivalent to
\[
\left.\frac{c}{\gcd(c,d)}\right| \frac nd
\iff
\operatorname{lcm}(c,d)\mid n.
\]
By \(d\mid r\) and \(r\mid m\), this is further equivalent to \(c\mid n\). Hence, we derive
\begin{equation}\label{eq:Rd+zata+2}
  R_d(\zeta)=
  \begin{cases}
    0,&c\nmid n,\\[2pt]
    \displaystyle \binom{\frac{n}{\operatorname{lcm}(c,d)}-1}{\frac{m}{\operatorname{lcm}(c,d)}-1}, & c\mid n.
  \end{cases}
\end{equation}
In particular, when \(c\mid n\), the value \(R_d(\zeta)\) depends only on \(\operatorname{lcm}(c,d)\).

We begin to compute $R(\zeta)$ and claim $R(\zeta)=P(\zeta)$. We arrange the proof of this claim as follows.
Now extend the definition by setting \(R_1(q)=P(q)\) and \(\mu(1)=1\). We claim that
\begin{equation}\label{eq+mobi+sum}
\sum_{d\mid r}\mu(d)R_d(\zeta)=0.
\end{equation}
If \(c\nmid n\), then (\ref{eq+mobi+sum}) is immediate from (\ref{eq:P+zata}) and (\ref{eq:Rd+zata+2}). So in the following, we suppose \(c\mid n\). Choose a prime \(p\mid c\). Since \(c\mid m\) and \(c\mid n\), we have \(p\mid m\) and \(p\mid n\), hence \(p\mid r\). Note $\mu(d)=0$ for $d$ having a square divisor. So we don't need to consider such case in (\ref{eq+mobi+sum}). For every squarefree divisor \(d\mid r\) with \(p\nmid d\), pair \(d\) with \(d\,p\). Then \(d\,p\mid r\) and
$
\mu(dp)=-\mu(d).
$
Moreover, it follows from \(p\mid c\) that
\[
\operatorname{lcm}(c,dp)=\operatorname{lcm}(c,d),
\]
which combining (\ref{eq:Rd+zata+2}) gives \(R_{dp}(\zeta)=R_d(\zeta)\). Therefore
\[
\mu(d)R_d(\zeta)+\mu(dp)R_{dp}(\zeta)
=
\mu(d)R_d(\zeta)-\mu(d)R_d(\zeta)=0.
\]
In consequence, all terms cancel in pairs, which implies (\ref{eq+mobi+sum}).

From (\ref{eq+mobi+sum}) and the definition of \(R(q)\), we obtain
\[
P(\zeta)-R(\zeta)
=
P(\zeta)+\sum_{\substack{d>1,\, d\mid r}}\mu(d)R_d(\zeta)
=
\sum_{d\mid r}\mu(d)R_d(\zeta)=0.
\]
Hence
\begin{equation}\label{eq+R+P+zata}
R(\zeta)=P(\zeta).
\end{equation}
In what follows we will compute $N(\zeta)$ by considering two cases:

\textbf{Case 1}: \(c\mid n\). Then \(n+1\equiv 1\pmod c\), so
\[
[n+1]_{\zeta}
=
\frac{1-\zeta^{n+1}}{1-\zeta}
=
\frac{1-\zeta}{1-\zeta}=1.
\]
Together with (\ref{eq+R+P+zata}), this yields
\[
N(\zeta)=P(\zeta)-[n+1]_{\zeta}R(\zeta)=P(\zeta)-R(\zeta)=0.
\]

\textbf{Case 2}: \(c\nmid n\). Combining (\ref{eq:P+zata}) and (\ref{eq:Rd+zata+2}) gives \(P(\zeta)=0\) and \(R(\zeta)=0\), so again
\[
N(\zeta)=P(\zeta)-[n+1]_{\zeta}R(\zeta)=0.
\]

Therefore \(N(q)=P(q)-[n+1]_q R(q)\) vanishes at every root of \([m]_q\). Because each root of \([m]_q\) is simple, $Q(q)$ is a polynomial in $q$.
Note for any two polynomials $U(q)$ and $V(q)$ in \(\mathbb{Z}[q]\) that the quotient $U(q)/V(q)$ is in \(\mathbb{Z}[q]\) when $V(q)|U(q)$ and the leading coefficient of $V(q)$ is $1$. In consequence, $Q(q)\in\mathbb{Z}[q].$

Finally, it remains to bound the degree $\deg Q(q)\leq\delta-m+1$. In terms of $Q(q)=N(q)/{[m]_q}$, it suffices to prove $\deg N(q)\leq \delta$.
It follows from $L_d=\delta-n-\deg R_d(q)$ and the nonnegativity of $L_d$ in (\ref{Ld+posi}) that
$
\deg R(q)+n\le \delta
$
for every \(d>1\).
Therefore
$
\deg\bigl([n+1]_q R(q)\bigr)
\le
n+\deg R(q)
\le
\delta.
$
In consequence, we have
\[
\deg N(q)\le \delta.
\]

(ii)
For each $d\mid r$ and $d>1$, let
\begin{eqnarray}\label{def+Wd}
 W_d(q):=\frac{R_d(q)(1-q^{L_d})}{1-q^m}.
\end{eqnarray}
Combining (\ref{def+R}) and (\ref{def+Wd}) immediately implies
\begin{eqnarray}\label{def+Wd+W}
W(q)=\frac{R(q)-S(q)}{1-q^m}
=-\sum_{\substack{d\mid r, d>1}}\mu(d)
\frac{R_d(q)(1-q^{L_d})}{1-q^m}=-\sum_{\substack{d\mid r, d>1 }}\mu(d)W_d(q).
\end{eqnarray}
Obviously, $q^{\delta-n-m}W_d(q^{-1})=W_d(q)$, which by (\ref{def+Wd+W}) immediately gives $q^{\delta-n-m}W(q^{-1})=W(q).$
In the following, in order to prove $W(q)\in\mathbb{Z}[q]$, it suffices to show $W_d(q)\in\mathbb{Z}[q]$.
We can prove this in a similar way with the proof $Q(q)\in\mathbb{Z}[q]$ in terms of the following two cases:

\textbf{Case 1: \(c\nmid n\).}
By Lemma~3.1, we have \(R_d(\zeta)=0\), which by (\ref{def+Wd}) implies $W_d(\zeta)=0$.

\textbf{Case 2: \(c\mid n\).}
Again by Lemma~3.1, \(R_d(\zeta)\neq 0\). We claim that \(c\mid L_d\). Recall that
\[
L_d=\delta-n-\deg R_d,
\]
where
$
\delta=(n-m)(m-1),\deg R_d=\left(\frac md-1\right)(n-m).
$
Thus, we obtain
\[
\delta-\deg R_d
=
(m-1)\bigl(n-m\bigr)-\left(\frac md-1\right)(n-m)
=
\frac{m}{d}(d-1)(n-m).
\]
Obviously, $m/d\in \mathbb{Z}$ by $d|r$ and $r|m$, and \(c\mid n-m\) by \(c\mid m\). Hence, we obtain
\[
L_d=\delta-n-\deg R_d\equiv \delta-\deg R_d\equiv 0\pmod c.
\]
Therefore
$
1-\zeta^{L_d}=0,
$  which implies $W_d(\zeta)=0$.

In consequence, in both cases the numerator \(R_d(q)(1-q^{L_d})\) vanishes at every nontrivial \(m\)-th root of unity. Obviously, it is also true for $q=1$.
So similar to the proof of $Q(q)\in\mathbb Z[q]$, we derive $W_d(q)\in\mathbb Z[q]$.

This completes the proof.
\end{proof}

\subsection{Reduction of coefficient positivity of $G_{n,m}(q)$}

The following result says that three nonnegativity inequalities imply that of coefficients of $G_{n,m}(q)$.
\begin{prop}\label{prop:reduction}
Let $C_k:=[q^k](1-q)P(q)$ and $R_k:=[q^k]R(q)$
for $k\geq0$. If
\[
  R(q)\ge_q0,\quad W(q)\ge_q0\quad and \quad C_k\ge R_k\quad(0\le k\le\delta/2),
\]
then we have $G_{n,m}(q)\ge_q0$.
\end{prop}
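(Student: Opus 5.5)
The plan is to derive a second representation of $G_{n,m}(q)$ that mirrors \eqref{eq:G-to-R+Q} under $q\mapsto q^{-1}$, and then to take the low-degree half of $Q$ from one representation and the high-degree half from the other. All the needed reciprocity is already available. We have $q^\delta P(q^{-1})=P(q)$, and $q^{n}\qint{n+1}|_{q\to q^{-1}}=\qint{n+1}$. By \eqref{symmery+R}, $q^{\delta-n}R(q^{-1})=S(q)$. Put $N(q)=P(q)-\qint{n+1}R(q)$. Then
\[
q^{\delta}N(q^{-1})=P(q)-\qint{n+1}S(q)=N(q)+\qint{n+1}(1-q^m)W(q).
\]
Divide by $\qint m$ and use $\qint{n+1}(1-q^m)/\qint m=1-q^{n+1}$. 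Writing $T(q):=q^{\delta-m+1}Q(q^{-1})$, which is a polynomial by Lemma~\ref{lem:polynomiality}(i), this gives
\[
T(q)=Q(q)+(1-q^{n+1})W(q).
\]
Substituting into \eqref{eq:G-to-R+Q} and using $R=S+(1-q^m)W$ yields the mirror representation
\[
G_{n,m}(q)=\frac{S(q)}{1-q^m}+\frac{T(q)}{1-q^{n+1}}.
\]
Here $S\ge_q0$ because it is the reversal of $R\ge_q0$.

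Next comes the coefficient information on $Q$. Clearing denominators in the definition of $Q$ gives
\[
(1-q^m)Q(q)=(1-q)P(q)-R(q)+q^{n+1}R(q).
\]
Hence, for every $k$,
\[
Q_k=\sum_{j\ge0}\bigl(C_{k-jm}-R_{k-jm}+R_{k-jm-n-1}\bigr).
\]
For $k\le\delta/2$ every index $k-jm\le\delta/2$, so the hypothesis $C_i\ge R_i$ for $0\le i\le\delta/2$ together with $R\ge_q0$ gives $Q_k\ge0$. Thus all coefficients of $Q$ in degrees $\le\delta/2$ are nonnegative. By the definition of $T$ as the reversal of $Q$, all coefficients of $T$ in degrees $\ge\delta-m+1-\delta/2$ are nonnegative. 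Since $m\ge3$, the two ranges overlap: every degree $k\in[0,\delta-m+1]$ lies in the good range of $Q$ or in the good range of $T$.

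To combine the two representations, fix a cut $h$ with $\delta/2-m+1\le h\le\delta/2$. Write $Q=Q^{\le h}+Q^{>h}$ by degree, so $Q^{\le h}\ge_q0$ and also $T^{>h}\ge_q0$. From $T=Q+(1-q^{n+1})W$ we get
\[
Q^{>h}=T^{>h}-\bigl[(1-q^{n+1})W\bigr]^{>h}.
\]
Therefore
\[
G_{n,m}(q)=\frac{R(q)}{1-q^m}+\frac{Q^{\le h}(q)}{1-q^{n+1}}+\frac{T^{>h}(q)}{1-q^{n+1}}-\frac{\bigl[(1-q^{n+1})W\bigr]^{>h}}{1-q^{n+1}}.
\]
The first three terms are $\ge_q0$. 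The remaining task is to show that the last term is dominated by $R/(1-q^m)$.

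\textbf{Main obstacle.} The difficulty lies in this last correction. I would expand $\bigl[(1-q^{n+1})W\bigr]^{>h}/(1-q^{n+1})$ as $W^{>h}$ plus a finite boundary sum supported on the window where $W$ straddles the cut: it consists of $q^{n+1}$-shifts of the coefficients $W_i$ with $h-n<i\le h$. The term $W^{>h}$ is absorbed by $W\le_q R/(1-q^m)$, which holds because $R/(1-q^m)=S/(1-q^m)+W$ and $S\ge_q0$. The boundary terms carry sign $+$ and help, since $W\ge_q0$. Symmetrically, one can absorb any leftover using $S/(1-q^m)$ from the mirror representation. This uses the symmetry of $W$ about $(\delta-n-m)/2$ from Lemma~\ref{lem:polynomiality}(ii), which lies below $h$. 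The delicate point is choosing $h$ so that the truncation of $(1-q^{n+1})W$ produces only nonnegative boundary contributions; I would first try $h=\lfloor\delta/2\rfloor$. If the bookkeeping fails, I would fall back on averaging the two representations degree by degree, using $Q$ below the cut and $T$ above it, and verifying nonnegativity directly from $W\ge_q0$ and $S\ge_q0$.
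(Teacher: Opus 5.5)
Your argument is correct, and it handles the upper half of $Q$ differently from the paper. The paper proves the stronger statement $Q\ge_q0$ outright. Besides the lower-half expansion you also use, it extracts the low-degree coefficients of the reflection $T(q)=q^{\delta-m+1}Q(q^{-1})$, obtaining $Q_{\delta-m+1-k}=\sum_{j\ge0}(C_{k-jm}-R_{k-jm})+[q^k]W(q)+\sum_{j\ge0}S_{k-n-1-jm}\ge0$ for $0\le k\le(\delta-m+1)/2$. You already had what this needs. Your identity $T=Q+(1-q^{n+1})W$ gives $[q^k]T=Q_k+[q^k]W-[q^{k-n-1}]W$. Inserting your formula for $Q_k$ and $W=(R-S)/(1-q^m)$ collapses this to exactly that expression, so no cut is required. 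Instead, you splice the two representations at the level of $G_{n,m}$. Unwound, your decomposition reads
\[
G_{n,m}(q)=\Bigl(\frac{S(q)}{1-q^m}+W^{\le h}(q)\Bigr)+\frac{Q^{\le h}(q)+T^{>h}(q)}{1-q^{n+1}}+\frac{q^{n+1}}{1-q^{n+1}}\sum_{i=h-n}^{h}\bigl([q^i]W(q)\bigr)q^{i}.
\]
In effect you show $T^{\le h}/(1-q^{n+1})\ge_q0$ rather than $T^{\le h}\ge_q0$. This is valid under the same three hypotheses, but it yields only $G_{n,m}\ge_q0$. The paper's route gives, with a one-line coefficient extraction, the cleaner fact that both summands of \eqref{eq:G-to-R+Q} are nonnegative. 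The step you flagged as the main obstacle is also not delicate. The boundary term is the infinite periodic series displayed above, not a finite sum, and its window is $h-n\le i\le h$, not $h-n<i\le h$. It is $\ge_q0$ for every integer $h$ simply because $W\ge_q0$. Hence any $h$ with $\delta/2-m\le h\le\delta/2$ works, and you need neither the symmetry of $W$ nor a fallback.
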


\begin{proof}
Recall (\ref{eq:G-to-R+Q}) that
\begin{eqnarray}\label{def+Gnm}
  G_{n,m}(q)  =\frac{R(q)}{1-q^m}+\frac{Q(q)}{1-q^{n+1}}.
\end{eqnarray}
Because $\frac{1}{1-q^m}\ge_q0$, the condition $R(q)\ge_q0$ implies $\frac{R(q)}{1-q^m}\ge_q0$. So by (\ref{def+Gnm}), in order to prove
$G_{n,m}(q)\ge_q0$, it suffices to show $Q(q)\ge_q0$.

In terms of the definition of $Q(q)$, we can rewrite
\begin{eqnarray}\label{expres+Q}
  Q(q)=\frac{(1-q)P(q)}{1-q^m}-\frac{(1-q^{n+1})R(q)}{1-q^m}.
\end{eqnarray}
Let $Q(q)=\sum_{k\geq0}Q_kq^k$. Expanding the denominator in (\ref{expres+Q}) as a formal power series gives
\begin{eqnarray}\label{expres+Qk}
 Q_k=\sum_{j\ge0}
  \bigl(C_{k-jm}-R_{k-jm}+R_{k-n-1-jm}\bigr)
\end{eqnarray}
for $k\geq0$. By Lemma \ref{lem:polynomiality} (i), we have $\deg Q\le\delta-m+1$.
We will prove $Q_k\geq0$ for $0\le k\le\delta-m+1$ in terms of the following two cases:

\textbf{Case 1}: $0\le k\le(\delta-m+1)/2$. We have $k\leq \delta/2$. It follows from the condition $C_k\ge r_k$ for $0\le k\le\delta/2$ and $R_k\geq0$ for $k\geq0$ that
\begin{eqnarray}
 Q_k=\sum_{j\ge0}
  \left(C_{k-jm}-R_{k-jm}\right)+\sum_{j\ge0}R_{k-n-1-jm}\geq0.
\end{eqnarray}

\textbf{Case 2}: $(\delta-m+1)/2\leq k\le \delta-m+1$.
Using
$q^\delta P(q^{-1})=P(q)$,
  $[n+1]_{q^{-1}}=q^{-n}[n+1]_q$ and
  $[m]_{q^{-1}}=q^{1-m}[m]_q$,
we obtain
\begin{eqnarray}
  q^{\delta-m+1}Q(q^{-1})
  &=&\frac{P(q)-\qint{n+1}q^{\delta-n}R(q^{-1})}{\qint m}\\
  &=&\frac{P(q)-\qint{n+1}S(q)}{\qint m},\\
   &=&\frac{(1-q)P(q)}{1-q^m}-\frac{R(q)}{1-q^m}
  +W(q)+\frac{q^{n+1}S(q)}{1-q^m},\label{eq:reduction-Q-reflection}
\end{eqnarray}
which is a polynomial because
$\deg Q\le\delta-m+1$. Let $S(q)=\sum_kS_kq^k$, whose coefficients are
those of $R(q)$ in reverse order by (\ref{symmery+R}). So $R_k\geq0$ implies $S_k\geq0$ for $k\geq0$.

Taking coefficients of $q^k$ in (\ref{eq:reduction-Q-reflection}) yields
\begin{eqnarray}
 Q_{\delta-m+1-k}
 &=&\sum_{j\ge0}(C_{k-jm}-R_{k-jm})+[q^k]W(q)+\sum_{j\ge0}S_{k-n-1-jm},
\end{eqnarray}
which is nonnegative for $0\le k\le(\delta-m+1)/2$ in terms of $C_k\ge R_k$, $W(q)\ge_q0$ and
$S(q)\ge_q0$.
Let $k'=\delta-m+1-k$. We immediately have $(\delta-m+1)/2\leq k'\le \delta-m+1$ for $0\le k\le(\delta-m+1)/2$.
In consequence, we obtain $ Q_{k'}\geq0$ for $(\delta-m+1)/2\leq k'\le \delta-m+1$.

Combining this two cases gives $Q_k\geq0$ for $0\le k\le\delta-m+1$. Hence $G_{n,m}(q)\ge_q0$. This completes the proof.

\end{proof}

In the following sections, we will prove three nonnegativity inequalities in Proposition \ref{prop:reduction}.

\section{Partition transformation and positivity of the M\"obius correction $R(q)$}
\label{sec:correction-positive}

The aim of this section is to prove the M\"obius correction $R(q)\geq_q 0$. 
In order to complete this, we will turn the formula (\ref{def+Rd}) of $R(q)$ to a new one (\ref{def+Rq+new}) in terms of partition. In addition, we define a splitting partition transformation (\ref{eq:partition-split}). With the help of the new formula and the partition transformation, we show a injection, which implies $R(q)\geq_q 0$ in Propositon \ref{prop:R-positive}.

Fix $n,m\in\Z$ with $3\le m\le n/2$ and $r=\gcd(n,m)>1$, as in Section~\ref{sec:reduction}. Let $\lambda$
range over partitions of nonnegative integers, and define
\[
 \mathcal P:=\{\varnothing\}\cup
 \{\lambda\ne\varnothing:
        \ell(\lambda)\le m-1,\ \lambda_1\le n-m\}.
\]
For each $d\mid r$ with $d>1$, define
\begin{eqnarray}\label{def+SS}
  \mathcal S_d:=\{\lambda\in\mathcal P:
       \ell(\lambda)\le m/d-1,\quad
       d\mid\lambda_i\quad {\text{for}}\quad 1\le i\le\ell(\lambda)\}.
\end{eqnarray}
Since $d\mid n-m$, the map $\lambda\mapsto(\lambda_1/d,\ldots,
\lambda_{\ell(\lambda)}/d)$ is a bijection from $\mathcal S_d$
onto the partitions with at most $m/d-1$ parts, each at most
$(n-m)/d$. In consequence, in terms of (\ref{def+Rd}), we have
\begin{eqnarray}
  R_d(q)=\sum_{\lambda\in\mathcal S_d}q^{\wt(\lambda)}.
\end{eqnarray}
For a set $I$, recall its characteristic function
\[
 \textbf{1}_I=\begin{cases}
 1,&x\in I;\\
 0,&otherwise.
 \end{cases}
\]
Define $M:\mathcal P\to\Z$ by
\begin{eqnarray}\label{def+M+labta}
M(\lambda):=-\sum_{\substack{d\mid r,d>1}}
                     \mu(d)\chi_{\mathcal S_d}(\lambda).
\end{eqnarray}

Then the correction (\ref{def+R}) for $R(q)$ becomes to
\begin{eqnarray}\label{def+Rq+new}
  R(q)=-\sum_{\substack{d>1,\,d\mid r}}\mu(d)\sum_{\lambda\in\mathcal S_d}q^{\wt(\lambda)}=\sum_{\lambda\in\mathcal P}M(\lambda)q^{\wt(\lambda)}.
\end{eqnarray}

For $n\in\Z$ and $n\geq1$, let $\rad (n)$ denote the product of the distinct
prime divisors of $n$ and $\rad(1)=1$. For a partition $\lambda$, define

\begin{equation}\label{def:H}
  H(\lambda):=
 \begin{cases}
 \rad\bigl(\gcd(r,\lambda_1,\ldots,
                    \lambda_{\ell(\lambda)})\bigr),
     &\lambda\ne\varnothing,\\
 \rad (r),&\lambda=\varnothing.
 \end{cases}
\end{equation}

For squarefree $d\mid r$ with $d>1$, it is easy to prove
\[
 d\mid H(\lambda)\ \text{and}\
 d(\ell(\lambda)+1)\le m
 \quad\Longrightarrow\quad
 \lambda\in\mathcal S_d.
\]
Conversely, if $\lambda\in\mathcal S_d$, then
$d(\ell(\lambda)+1)\le m$ and $d\mid\lambda_i$ for every $i$.
The squarefreeness of $d$ therefore gives $d\mid H(\lambda)$.
Since $\mu(d)=0$ for nonsquarefree $d$ and
$\ell(\lambda)+1\le m$,
\begin{equation}\label{eq:partition-multiplicity}
 M(\lambda)
 =-\sum_{\substack{d\mid H(\lambda)\\
                    d>1,\ d(\ell(\lambda)+1)\le m}}\mu(d)
 =1-\sum_{\substack{d\mid H(\lambda)\\
                    d(\ell(\lambda)+1)\le m}}\mu(d).
\end{equation}
For $\lambda=\varnothing$, the identity
$\sum_{d\mid r}\mu(d)=0$ gives
\[
 M(\varnothing)=-\sum_{\substack{d\mid r,d>1}}\mu(d)=1.
\]

The next lemma identifies which prime divisors of $H(\lambda)$ can
contribute to $M(\lambda)$ and bounds any negative multiplicity.

\begin{samepage}
\begin{lem}
\label{lem:multiplicity}
Let $\lambda\in\mathcal P$ be nonempty and $s$ be the
number of distinct prime divisors of $H(\lambda)$. We have
\begin{itemize}
\item [\rm (i)]If $M(\lambda)\ne0$, then $H(\lambda)>1$ and its smallest
 prime divisor $p$ satisfies $\ell(\lambda)\le m/p-1$;
\item [\rm (ii)]If $M(\lambda)\ge2$ or $M(\lambda)<0$, then $H(\lambda)$
 has a second-smallest
 prime divisor $\phi$, and $\ell(\lambda)\le m/\phi-1$;
\item [\rm (iii)] If $s\le3$, then $M(\lambda)\ge0$;
\item [\rm (iv)]
If $s\ge1$, then $M(\lambda)\le2^{s-1}$. If $M(\lambda)=-c<0$,
 then $c\le2^{s-1}-1$.
\end{itemize}
\end{lem}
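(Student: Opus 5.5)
Everything will follow from formula \eqref{eq:partition-multiplicity}: for nonempty $\lambda\in\mathcal P$,
\[
M(\lambda)=1-\sum_{d\in D}\mu(d),\qquad D:=\{d\mid H(\lambda):\ d(\ell(\lambda)+1)\le m\}.
\]
Write $H=H(\lambda)=p_1p_2\cdots p_s$ with primes $p_1<\cdots<p_s$, and set $T:=m/(\ell(\lambda)+1)$. Then $D$ is the set of squarefree divisors $d$ of $H$ with $d\le T$. This set is a down-closed family (an order ideal) in the Boolean lattice of subsets of $\{p_1,\dots,p_s\}$. It always contains $1$, since $\ell(\lambda)+1\le m$. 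All four parts reduce to statements about $\sum_{d\in D}\mu(d)$ for such threshold ideals.

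For (i), suppose $H=1$, or $H>1$ but $p_1>T$. Then $D=\{1\}$ and $M(\lambda)=1-1=0$. So $M(\lambda)\neq0$ forces $H>1$ and $p_1\le T$, that is, $\ell(\lambda)\le m/p-1$ with $p=p_1$.

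For (ii), suppose $s\le1$, or $p_2>T$. Then $D$ consists of $1$ together with possibly $p_1$. Hence $M(\lambda)\in\{0,1\}$, because $1-(1-1)=1$ when $p_1\in D$. Thus $M(\lambda)\ge2$ or $M(\lambda)<0$ forces $s\ge2$ and $p_2\le T$, which is the claim with $\phi=p_2$.

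For (iii), I enumerate the ideals with $s\le3$. The ideal $D$ always contains every $p_i\le T$, and a product lies in $D$ only if all its factors do. Suppose the primes in $D$ are $p_1,\dots,p_j$; by the threshold property these are an initial segment. Then
\[
\sum_{d\in D}\mu(d)=1-j+e-f,
\]
where $e$ counts the pairwise products in $D$ and $f\in\{0,1\}$ records whether the triple product lies in $D$. The cases are:
\begin{itemize}
\item $j\le1$: the sum is $1$ or $0$.
\item $j=2$: the sum is $-1+e$ with $e\le1$, so $M\in\{1,2\}$.
\item $j=3$, $f=0$: the sum is $-2+e\le1$, since $e\le3$.
\item $j=3$, $f=1$: then $e=3$ and the sum is $0$.
\end{itemize}
In every case the sum is $\le1$, so $M(\lambda)\ge0$.

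For (iv), I need the bounds $1-2^{s-1}\le\sum_{d\in D}\mu(d)\le 2^{s-1}$. These give $M\le 2^{s-1}$ and $-M\le 2^{s-1}-1$. I plan a pairing argument on the largest prime $p_s$. Split $D=D'\cup p_sD''$, where $D'$ is the set of elements of $D$ not divisible by $p_s$, and $D''=\{d: dp_s\in D\}\subseteq D'$. Both $D'$ and $D''$ are ideals in the lattice on $s-1$ primes. Their difference $D'\setminus D''$ has at most $2^{s-1}$ elements, and then
\[
\sum_{d\in D}\mu(d)=\sum_{d\in D'\setminus D''}\mu(d),
\]
which already gives $|\sum\mu|\le 2^{s-1}$. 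The sharper lower bound $\ge 1-2^{s-1}$ comes from observing that $1\in D''$ whenever $p_s\in D$. Hence the difference set misses $1$, and equality $-2^{s-1}$ would require every set in the difference to have odd size, which is impossible for more than $2^{s-2}$ of them. I expect this careful sign bookkeeping in (iv) to be the main obstacle. If the pairing is delicate, I will fall back on induction on $s$ using the same decomposition.
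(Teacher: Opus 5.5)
Your proof is correct. For (i)--(iii) the paper only says they are ``easy to prove''. Your description of $D=\{d\mid H(\lambda): d(\ell(\lambda)+1)\le m\}$ as a threshold-defined, down-closed family of divisors, together with the case enumeration for (iii), supplies exactly that routine verification.

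For (iv) you take a more roundabout route than the paper. The paper counts divisors by M\"obius sign directly. In $M(\lambda)=-\sum_{d\in D,\,d>1}\mu(d)$, the positive contributions come from divisors with $\mu(d)=-1$, and the squarefree $H(\lambda)$ has $2^{s-1}$ of these. The negative contributions come from nontrivial divisors with $\mu(d)=+1$, and there are $2^{s-1}-1$ of these. Both bounds follow at once, without using any structure of $D$.

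Your splitting $D=D'\cup p_sD''$ instead uses down-closedness to rewrite $\sum_{d\in D}\mu(d)$ as a signed count over a set of divisors of $p_1\cdots p_{s-1}$. If you apply your parity count to both signs, this gives the sharper bound $|M(\lambda)-1|\le 2^{s-2}$ for $s\ge2$, which the paper does not need. So what you flagged as the main obstacle is not really one.

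The remark that $1\in D''$ when $p_s\in D$ is also superfluous. The odd-size count alone rules out $\sum_{d\in D}\mu(d)=-2^{s-1}$, and it also covers the case $p_s\notin D$, where $1$ does lie in $D'\setminus D''$. For $s=1$ the number of odd-size subsets is $0$ rather than $2^{s-2}$, which is harmless.
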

\end{samepage}

\begin{proof}
It is easy to prove (i), (ii) and (iii). For (iv), a squarefree integer with \(s\) prime factors has
\(2^{s-1}\) divisors of each M\"obius sign. There are
\(2^{s-1}-1\) nontrivial divisors with sign \(+1\). Counting
the possible positive and negative terms in \eqref{eq:partition-multiplicity}
proves the last assertion.
\end{proof}

For a nonempty partition $\lambda=(\lambda_1,\lambda_2,\ldots,
                    \lambda_{\ell(\lambda)})$ and $h\in\Z$ with
$1\le h<\lambda_1$, define a splitting partition transformation
\begin{equation}\label{eq:partition-split}
 T_h(\lambda):=
 (h,\lambda_1-h,\lambda_2,\ldots,
                    \lambda_{\ell(\lambda)})^\downarrow,
\end{equation}
where the superscript $\downarrow$ denotes rearrangement in weakly
decreasing order. The sequence
$\lambda_2,\ldots,\lambda_{\ell(\lambda)}$ is empty when
$\ell(\lambda)=1$.
Obviously,
\[
 \wt(T_h(\lambda))=\wt(\lambda),\quad
 \ell(T_h(\lambda))=\ell(\lambda)+1.
\]
The following lemma shows that a split partition determines its
source and splitting parameter. We will use this to prove
the injectivity in the proof of Proposition~\ref{prop:R-positive}.

\begin{lem}
\label{lem:split}
Let $\lambda$ be a nonempty partition.
Suppose that integers $H$ in (\ref{def:H}) and $h$ satisfy
\[
 \begin{gathered}
 H>1,\quad 0<h<H,\quad H\nmid2h,\quad \lambda_1>2h,\quad H\mid\lambda_i
                  \quad(1\le i\le\ell(\lambda)).
 \end{gathered}
\]
Then $T_h(\lambda)$ uniquely determines the pair $(h,\lambda)$ by an
inverse independent of $H$.
\end{lem}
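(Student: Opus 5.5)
The plan is to read off $h$ and $\lambda$ from $\mu:=T_h(\lambda)$ using only divisibility and the size of the parts of $\mu$; no knowledge of $H$ is needed. The multiset of parts of $\mu$ is $\{h,\lambda_1-h,\lambda_2,\ldots,\lambda_{\ell(\lambda)}\}$. Since $0<h<H$ and $H\mid\lambda_i$ for every $i$, neither $h$ nor $\lambda_1-h$ is divisible by $H$. Indeed, $H\mid(\lambda_1-h)$ would force $H\mid h$, which is impossible because $0<h<H$. Every other part $\lambda_i$ with $i\ge2$ is divisible by $H$. So the two new parts are the only parts of $\mu$ not divisible by $H$.

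To get an inverse that does not depend on $H$, I first identify $h$ intrinsically. Because $\lambda_1>2h$, we have $\lambda_1-h>h$. Moreover, $h<H\le\lambda_i$ for every $i$, since each $\lambda_i$ is a positive multiple of $H$. Hence $h$ is strictly smaller than every other part of $\mu$, so $h$ is the \emph{smallest part} of $\mu$, i.e.\ $h=\mu_{\ell(\mu)}$, and it occurs with multiplicity one. This recovers $h$ directly from $\mu$.

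Next I locate the partner part $\lambda_1-h$. I want to characterize it, without using $H$, as the part $x$ with $x+h$ maximal subject to $x+h\ge$ every remaining part, taking the largest candidate. Concretely, define the inverse by removing the smallest part $h$ of $\mu$, then removing the largest remaining part $y=\mu_1$, and outputting $\lambda=(y+h,\ \text{rest})^\downarrow$. The key step is to check that $\lambda_1-h$ is the largest part of $\mu$. It is at least $\lambda_1-h>\lambda_1/2$, and it dominates each $\lambda_i$ with $i\ge2$ except when $\lambda_2$ lies in $(\lambda_1-h,\lambda_1]$. Since $H\mid\lambda_1$, $H\mid\lambda_2$ and $\lambda_2\le\lambda_1$, such a $\lambda_2$ must equal $\lambda_1$. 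In that case the parts $\lambda_1$ and $\lambda_1-h$ both occur in $\mu$ and the naive rule fails.

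So the inverse will be: let $h$ be the smallest part of $\mu$. Among the parts of $\mu$ other than $h$, choose the largest part $x$ such that $x+h$ does not exceed... The clean way is to choose $x$ to be the largest part of $\mu\setminus\{h\}$ satisfying $x\not\equiv0$ relative to the remaining parts. The $H$-free formulation I would aim for is: $x$ is the unique part with $\gcd$ of the remaining parts (after deleting $h$ and $x$, together with $x+h$) not dividing $h$. Verifying that this choice is unique, using $H\nmid 2h$ to exclude the ambiguity $x=\lambda_1-h$ versus $x=h$-type coincidences (where $\lambda_1-h\equiv h$, that is $H\mid\lambda_1-2h$), is the main obstacle. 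I expect to resolve it by a short case check on whether $\lambda_2=\lambda_1$. Once $(h,x)$ is recovered, $\lambda=(x+h,\mu\setminus\{h,x\})^\downarrow$, which proves uniqueness.
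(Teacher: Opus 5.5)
There is a genuine gap, and it sits at the only nontrivial step. Two parts of your argument are correct. Recovering $h$ as the unique smallest part of $\mu=T_h(\lambda)$ is fine. So is your case analysis: $\mu_1=\lambda_1-h$ unless $\lambda_2=\lambda_1$, in which case $\mu_1=\lambda_1$. What the lemma needs is a rule that reads only $\mu$ and decides which of these two situations holds, and you do not supply one. The criteria you float are either unfinished (``$x+h$ does not exceed\ldots'', ``$x\not\equiv0$ relative to the remaining parts'') or false.

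The gcd criterion fails already for $H=3$, $h=1$, $\lambda=(9,9)$ (with $3\mid r$, so $H(\lambda)=3$). This satisfies every hypothesis and gives $\mu=(9,8,1)$. The choice $x=8$ yields the candidate $(9,9)$ with gcd $9\nmid 1$. But $x=9$ yields $(10,8)$ with gcd $2\nmid 1$ as well. So the uniqueness you planned to verify is false; with $h=1$, every candidate with gcd $>1$ passes. Moreover, the natural tie-break ``largest $x$'' returns the wrong source $(10,8)$.

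The missing idea, which is the paper's rule, is to delete $h$ and test whether $\mu_1-h$ occurs among the remaining parts $\mu_1,\ldots,\mu_{\ell(\lambda)}$.
\begin{itemize}
\item If $\lambda_2=\lambda_1$, then $\mu_1=\lambda_1$ and $\mu_1-h=\lambda_1-h$ does occur. It occurs exactly once, since it is the only remaining part not divisible by $H$. Adding $h$ back to that occurrence recovers $\lambda$.
\item If $\lambda_1>\lambda_2$ (including $\ell(\lambda)=1$), then $\mu_1=\lambda_1-h$, and $\mu_1-h=\lambda_1-2h$ is not a remaining part. It is strictly smaller than $\lambda_1-h$. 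It also differs from every $\lambda_i$ with $i\ge2$, because $H\mid\lambda_1$ and $H\nmid 2h$ force $H\nmid\lambda_1-2h$. So you add $h$ to $\mu_1$, which is exactly your naive rule, now correctly confined to this case.
\end{itemize}
This is precisely where the hypothesis $H\nmid 2h$ is used: it prevents a false positive in the test. Since the test reads only $\mu$, the resulting inverse is independent of $H$.
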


\begin{proof}
Write $T_h(\lambda)=(\mu_1,\ldots,\mu_{\ell(\lambda)+1})$.
(1) We first determine $h$ from the smallest entry of $T_h(\lambda)$.
In terms of the hypotheses conditions, we obtain
\begin{equation}\label{ineq:labta}
\lambda_1-h>h,\quad \lambda_i\ge H>h\quad(1\le i\le\ell(\lambda)).
\end{equation}

Since $T_h(\lambda)$ is the decreasing rearrangement of
$(h,\lambda_1-h,\lambda_2,\ldots,
\lambda_{\ell(\lambda)})$, it follows from (\ref{ineq:labta}) that
\[
 \begin{aligned}
 \mu_1&\ge\cdots\ge\mu_{\ell(\lambda)}>
                 \mu_{\ell(\lambda)+1}=h,\\
 \mu_i&\ge\min\{H,\lambda_1-h\}>h
                 \quad(1\le i\le\ell(\lambda)).
 \end{aligned}
\]
Thus we obtain $h=\mu_{\ell(\lambda)+1}$. 

(2) In what follows we will determine $\lambda$. To recover $\lambda$, it suffices to delete this
entry $h$ and apply one of the following replacements to the remaining entries before
reordering:
\[
 \begin{cases}
  \mu_1-h\longmapsto\mu_1,
    &\text{if }\mu_j=\mu_1-h
                  \text{ for some }1\le j\le\ell(\lambda),\\
  \mu_1\longmapsto\mu_1+h,&\text{otherwise}.
 \end{cases}
\]

Now we verify this rule according to whether the largest entry $\lambda_1$ of
$\lambda$ occurs more than once:

\textbf{Case 1}: If
$\lambda_j=\lambda_1$ for some $2\le j\le\ell(\lambda)$, then
\[
 \mu_1=\lambda_1,\quad
 \mu_1-h=\lambda_1-h,\quad
 \bigl|\{i:1\le i\le\ell(\lambda),\
                      \mu_i=\mu_1-h\}\bigr|=1.
\]
Indeed, $\lambda_1-h$ occurs among
$\mu_1,\ldots,\mu_{\ell(\lambda)}$, but
$H\nmid(\lambda_1-h)$ whereas $H\mid\lambda_i$
for $2\le i\le\ell(\lambda)$.
So, the first replacement therefore recovers $\lambda$.

\textbf{Case 2}: If $\ell(\lambda)=1$, or if
$\ell(\lambda)\ge2$ and $\lambda_2<\lambda_1$, then
\[
 \lambda_1-\lambda_i\ge H>h
                \quad(2\le i\le\ell(\lambda)),
 \quad \mu_1=\lambda_1-h.
\]
Moreover, $\mu_1-h=\lambda_1-2h$ and $H\nmid(\mu_1-h)$ because
$H\mid\lambda_1$ and $H\nmid2h$. Since
$\{\mu_1,\ldots,\mu_{\ell(\lambda)}\}
 =\{\lambda_1-h,\lambda_2,\ldots,
                     \lambda_{\ell(\lambda)}\}$,
we have
$\mu_1-h\notin\{\mu_1,\ldots,\mu_{\ell(\lambda)}\}$.
Thus the second replacement recovers $\lambda$. 

The rule depends only on $T_h(\lambda)$. This completes the proof.
\end{proof}

Now we present the nonnegativity of coefficients of $R(q)$ as follows.
\begin{prop}
\label{prop:R-positive}
For the polynomial $R(q)$, we have $R(q)\geq_q 0$.
\end{prop}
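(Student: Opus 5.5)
We use the partition expansion \eqref{def+Rq+new}, $R(q)=\sum_{\lambda\in\mathcal P}M(\lambda)q^{\wt(\lambda)}$, and prove, for each $k\ge0$, that
\[
\sum_{\lambda\vdash k,\ \lambda\in\mathcal P,\ M(\lambda)<0}|M(\lambda)|\;\le\;\sum_{\lambda\vdash k,\ \lambda\in\mathcal P,\ M(\lambda)>0}M(\lambda).
\]
We do this with a weight-preserving injection from a multiset of negative copies into a multiset of positive copies. For each $\lambda$ with $M(\lambda)=-c<0$, we take $c$ copies indexed by a parameter $h$ and send the copy $h$ to $T_h(\lambda)$. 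Lemma~\ref{lem:split} then guarantees that different pairs $(h,\lambda)$ have different images, with an inverse that does not depend on $H$.

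First we fix the data for a negative $\lambda$. Lemma~\ref{lem:multiplicity}(iii) says $s\ge4$, and by (ii) we have $\ell(\lambda)\le m/\phi-1$, where $p<\phi$ are the two smallest primes dividing $H=H(\lambda)$. By (iv), $c\le 2^{s-1}-1$. We take $h$ from a set of size $c$ inside $\{1,\dots,p-1\}$, or more generally inside $\{0<h<H: H\nmid 2h\}$. Since $H\ge 2\cdot3\cdot5\cdot7$ is large, there are plenty of such $h$, for instance $h\in\{1,\dots,c\}$, because $c<2^{s-1}\le H/2$.

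The hypotheses of Lemma~\ref{lem:split} then hold. We have $H\mid\lambda_i$, and $\lambda_1\ge H>2h$ as long as $h<H/2$.

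Next we check that $\mu:=T_h(\lambda)$ lies in $\mathcal P$ with $M(\mu)>0$. Its length is $\ell(\lambda)+1\le m/\phi\le m-1$. Its parts are bounded by $\lambda_1\le n-m$. Its weight equals the weight of $\lambda$.

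The key point is $H(\mu)$. Choose $h$ with $\gcd(h,r)=1$, for example $h=1$ or $h$ coprime to $r$ among small values. Then $H(\mu)=1$, so $M(\mu)=0$ by Lemma~\ref{lem:multiplicity}(i), and the target is not positive. So this choice fails. Instead we choose $h$ to be a multiple of $p$ but not of $H$, so that $p\mid H(\mu)$. The natural refinement is $h=jp\cdot(\text{product of a subset of primes of }H)$ that avoids some prime. Then $H(\mu)$ is a proper divisor $H'$ of $H$ with few prime factors. If $H'$ is prime with $\ell(\mu)+1=\ell(\lambda)+2\le m/p$, then \eqref{eq:partition-multiplicity} gives $M(\mu)=1$.

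So the plan is to take $h$ with $\rad\gcd(r,h)=p$. We need $c$ such values, $h=p\cdot t$ with $t$ coprime to $H/p$, and $h<H/2$, $H\nmid 2h$. Since $H/p\ge 3\cdot5\cdot7$, there are enough such $t$ below $H/(2p)$. Then $H(\mu)=p$ and $M(\mu)=1$, using $\ell(\mu)+1\le m/\phi+1\le m/p$. That last inequality needs care and follows from $\phi\ge p+1$ together with $p\mid m$ and $\phi\mid m$.

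For injectivity, Lemma~\ref{lem:split} recovers $(h,\lambda)$ from $\mu$ alone. So distinct negative copies map to distinct partitions $\mu$, each of multiplicity exactly $1$. We also need these images to be disjoint for distinct source pairs, which is exactly what the $H$-independent inverse provides. Each target $\mu$ has $M(\mu)=1\ge$ the number of preimages, which is at most $1$. Summing over weight $k$ gives $R_k\ge0$.

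The main obstacle is step two: ensuring that $M(T_h(\lambda))$ is positive and that enough admissible $h$ exist. This requires length bookkeeping so that $p(\ell(\mu)+1)\le m$, and counting residues to have at least $2^{s-1}-1$ values of $h$. If the prime-$p$ choice fails on lengths, the first fallback is to use $H(\mu)=\phi$ instead, which is compatible with Lemma~\ref{lem:multiplicity}(ii).
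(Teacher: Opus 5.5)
Once the discarded first attempts are set aside (the choices $h\le p-1$ and $h\in\{1,\dots,c\}$, which give $H(T_h(\lambda))=1$), your argument is the paper's proof. You take $h=pt$ with $\gcd(t,H/p)=1$ and $2h<H$, so that $H(T_h(\lambda))=\gcd(H,h)=p$. You get $M=1$ from $\ell(\lambda)+2\le m/\phi+1\le m/p$, and injectivity from the $H$-independent inverse of Lemma~\ref{lem:split}; the fallback to $H(\mu)=\phi$ is never needed. The only step you merely assert is that at least $c$ admissible $t$ exist. The paper proves it by pairing $t\leftrightarrow H/p-t$, which gives $\tfrac12\varphi(H/p)\ge\tfrac12\cdot2\cdot4\cdot6\cdot2^{s-4}=3\cdot2^{s-1}>c$.
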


\begin{proof}
By (\ref{def+Rq+new}), for each $k\ge0$, the coefficient of $q^k$ in $R(q)$ is
\begin{eqnarray*}
 R_k&=&
 \sum_{\substack{\lambda\in\mathcal P\\
                  \wt(\lambda)=k,\ M(\lambda)>0}}M(\lambda)
 - \sum_{\substack{\lambda\in\mathcal P\\
                  \wt(\lambda)=k,\ M(\lambda)<0}}(-M(\lambda))\\
&\geq&\bigl|\{\lambda\in\mathcal P:
       \wt(\lambda)=k,\ M(\lambda)=1\}\bigr|
- \sum_{\substack{\lambda\in\mathcal P\\
                  \wt(\lambda)=k,\ M(\lambda)<0}}(-M(\lambda)).
\end{eqnarray*}
So in order to show $R(q)\geq_q 0$, it suffices to prove 
\begin{eqnarray}\label{ineq+M}
 \bigl|\{\lambda\in\mathcal P:
       \wt(\lambda)=k,\ M(\lambda)=1\}\bigr|
\geq\sum_{\substack{\lambda\in\mathcal P\\
                  \wt(\lambda)=k,\ M(\lambda)<0}}-M(\lambda).
\end{eqnarray}

Fix $\lambda\in\mathcal P$ with $M(\lambda)<0$, and set
$c=-M(\lambda)$ and $H=H(\lambda)$. For each such partition with $M(\lambda)<0$, we will use $T_h(\lambda)$ to construct $c$ distinct
partitions $\lambda'$ with $\wt(\lambda')=\wt(\lambda)$ and $M(\lambda')=1$, with no overlap
between different sources.
Obviously, the partition $\lambda$ is nonempty since $M(\lambda)<0$ and $M(\varnothing)=1$.
Thus, by Lemma~\ref{lem:multiplicity} (iii) and (iv), $H$ has $s\ge4$ distinct prime
divisors and $c\le2^{s-1}-1$. We first prove the next claim.
\begin{cla}
In terms of $T_h(\lambda)$, at least $c$ choices of the splitting parameter
$h$ are available. 
\end{cla}
\begin{proof}Let $p$ be the smallest prime divisor of $H$ and set
\[
 \mathcal A_\lambda:=
 \{h\in\Z_{\ge1}:p\mid h,\ 2h<H,\
                         \gcd(h/p,H/p)=1\}.
\]
In terms of definition of $H$, the integer $H/p$ is odd. Writing $h=pj$, we have
\[|\mathcal A_\lambda|=|\{
 1\le j<\frac{H/p}{2},\quad \gcd(j,H/p)=1.\}|
\]
The units $j$ modulo $H/p$ occur in pairs $j$ and $H/p-j$. Since
$H/p$ is squarefree, this pairing and the product formula for
Euler's totient function give
\[
 |\mathcal A_\lambda|
 =\frac12\prod_{\substack{r\mid H\\r\ne p}}(r-1)
 \ge\frac{2\cdot4\cdot6}{2}\,2^{s-4}
 =3\cdot2^{s-1}>c,
\]
where for the first inequality from the first three primes dividing $H/p$ are at
least $3,5,7$, and every remaining prime contributes a factor of
at least $2$. This proves this claim.
\end{proof}

From this claim, we assume that
$h_1(\lambda)<\cdots<h_c(\lambda)$ are the first $c$ elements of
$\mathcal A_\lambda$. In terms of the definition (\ref{def+M+labta}) of $M(\lambda)$, we next verify that each split
$\nu=T_{h_i(\lambda)}(\lambda)$ belongs to $\mathcal S_p$ and satisfies
$M(\nu)=1$. In order to prove $\nu\in\mathcal S_p$, by (\ref{def+SS}), we need to show that $\ell(\nu)\le m/p-1,\,
       p\mid\nu_i\quad {\text{for}}\quad 1\le i\le\ell(\nu)\}$. Put $h=h_i(\lambda)$. Since $H\mid\lambda_j$ for
$1\le j\le\ell(\lambda)$, we have
\[
 \lambda_1\ge H>2h>0.
\]
Thus $0<h<H$, $\lambda_1>2h$, and $H\nmid2h$, so the conditions of
Lemma~\ref{lem:split} are satisfied. Then by Lemma~\ref{lem:split}, we obtain $\lambda'=\nu=T_{h_i(\lambda)}(\lambda)$. The largest entry of $\nu$ is at most
$\lambda_1\le n-m$. Let $\phi$ be the second-smallest prime divisor
of $H$. By Lemma~\ref{lem:multiplicity} (2),
$\ell(\lambda)+1\le m/\phi$, which combines
$p\phi\mid m$ and $\phi>p$ to give
\[
 \frac mp-\frac m\phi=\frac{m}{p\phi}(\phi-p)\ge1,
 \qquad
 \ell(\nu)=\ell(\lambda)+1\le\frac m\phi\le\frac mp-1.
\]
In consequence, $\nu\in\mathcal P$. To compute $H(\nu)$, observe that
\[
 \gcd(r,\nu_1,\ldots,\nu_{\ell(\lambda)+1})
 =\gcd(r,\lambda_1,\ldots,\lambda_{\ell(\lambda)},h).
\]
Using $\rad(\gcd(u,v))=\gcd(\rad (u),v)$ for $u,v\in\Z_{\ge1}$, we obtain
\[
 \begin{aligned}
 H(\nu)
 &=\rad\bigl(\gcd(r,\lambda_1,\ldots,
                       \lambda_{\ell(\lambda)},h)\bigr)=\gcd(H,h)=p,
 \end{aligned}
\]
where the last equality follows from $h=p(h/p)$,
$\gcd(h/p,H/p)=1$ and the squarefreeness of $H$. Hence, we obtain
$p\mid H(\nu)$ and $\ell(\nu)\le m/p-1$, so
$\nu\in\mathcal S_p$. Since $H(\nu)=p$,
by \eqref{eq:partition-multiplicity}, we also have $M(\nu)=-\mu(p)=1$.
From the above, we have construct $c$ distinct
partitions $\lambda'$ with $\wt(\lambda')=\wt(\lambda)$ and $M(\lambda')=1$.

Finally, we show that the resulting partitions are distinct, even
when they arise from different $\lambda$. For every
$\lambda\in\mathcal P$ with $M(\lambda)<0$ and
$1\le i\le-M(\lambda)$, define
\[
 \Phi(\lambda,i):=T_{h_i(\lambda)}(\lambda).
\]
By Lemma~\ref{lem:split}, $\Phi(\lambda,i)$ determines both
$\lambda$ and $h_i(\lambda)$. The increasing order $h_1(\lambda)<\cdots<h_c(\lambda)$ of
$\mathcal A_\lambda$ then determines $i$. Thus $\Phi$ is
injective and preserves weight. So we obtain for every $k\ge0$ that
\[
 \bigl|\{\nu\in\mathcal P:
       \wt(\nu)=k,\ M(\nu)=1\}\bigr|
 \ge
 \sum_{\substack{\lambda\in\mathcal P\\
                  \wt(\lambda)=k,\ M(\lambda)<0}}(-M(\lambda)),
\]
which implies (\ref{ineq+M}). This completes the proof.
\end{proof}

\section{Positivity of coefficients of $(1-q)P(q)-R(q)$}
\label{sec:domination}

In this section, we will prove $[q^k](1-q)P(q)\geq[q^k]R(q)$
for $0\le k\le\delta/2$, i.e., $C_k\geq R_k$.

Let $n,m\in\Z$ satisfy
$3\le m\le n/2$ and $r=\gcd(n,m)>1$.
Use the spaces $V_{b,k}$ and the derivation $\mathcal D$ of
Section~\ref{sec:kernel} with $a=n-m$ and $b=m-1$. Then
$P(q)=\mathcal G_{n-m,m-1}(q)$, $\delta=(n-m)(m-1)$, and by
Lemma~\ref{lem:kernel} we have
\[
 C_k=\dim_{\Q}\ker\bigl(
 \mathcal D:V_{m-1,k}\longrightarrow V_{m-1,k-1}\bigr)
 \qquad(0\le k\le\delta/2).
\]
We will compare $C_k$ with $R_k$ in terms of $m\ne6$ and $m=6$.

\subsection{Positivity for $m\ne6$ from linearly independent kernel vectors}
\label{sec:five1}

In order to prove for $m\ne6$ that $ C_k\ge R_k$ for $0\le k\le\delta/2$, we need to construct linearly independent kernel vectors in terms of subsection \ref{sec:kernel}
\begin{prop}
\label{prop:domination}
If $m\ne6$, then we have
$ C_k\ge R_k$ for $0\le k\le\delta/2$.
\end{prop}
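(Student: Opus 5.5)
We will bound $R_k$ from above by a count of partitions and then exhibit at least that many linearly independent vectors in $\ker(\mathcal D:V_{m-1,k}\to V_{m-1,k-1})$. By Lemma~\ref{lem:kernel}, $C_k$ is this kernel dimension, so this gives $C_k\ge R_k$ for $0\le k\le\delta/2$.

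\textbf{Step 1: an upper bound for $R_k$.} By (\ref{def+Rq+new}) and Lemma~\ref{lem:multiplicity}, $R_k=\sum_{\wt(\lambda)=k}M(\lambda)$. Negative multiplicities only decrease this sum, so $R_k\le\sum_{\wt(\lambda)=k,\,M(\lambda)>0}M(\lambda)$. Every partition with $M(\lambda)\ne0$ is nonempty only if $H(\lambda)>1$, its parts are divisible by the smallest prime $p\ge2$ of $H(\lambda)$, and $\ell(\lambda)\le m/p-1\le m/2-1$. Partitions with $M(\lambda)\ge2$ have $\ell(\lambda)\le m/\phi-1\le m/3-1$ and multiplicity at most $2^{s-1}$. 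We will absorb each such multiplicity by adjoining to $\lambda$ the splits $T_h(\lambda)$ of Lemma~\ref{lem:split}, as in Proposition~\ref{prop:R-positive}; the added parts are still short and are divisible by a prime. The outcome is an injection from a multiset of size at least $R_k$ into the set $\mathcal T_k$ of partitions of $k$ with parts in $\{2,\ldots,n-m\}$ and length at most $m/2-1$.

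\textbf{Step 2: kernel vectors.} For each $\lambda\in\mathcal T_k$ we take the vector $K_\lambda$ of (\ref{eq:generl-vector}) with $a=n-m$ and $b=m-1$. This requires $C(\lambda)\le m-1$. Since $C(\lambda)\le 3\ell(\lambda)$, the condition holds whenever every part is even and $2\ell(\lambda)\le m-1$, or in general whenever $3\ell(\lambda)\le m-1$. The $K_\lambda$ have distinct leading partitions, so they are linearly independent elements of $V_{m-1,k}\cap\ker\mathcal D$.

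\textbf{Step 3: the main obstacle.} The hard part is partitions $\lambda$ with $\ell(\lambda)$ between $(m-1)/3$ and $m/2-1$ that have odd parts. Such parts can only occur when $p$ is odd, in which case $\ell(\lambda)\le m/3-1$, so $C(\lambda)\le 3\ell(\lambda)\le m-3$ and $K_\lambda$ is available. When $p=2$ all parts are even, $C(\lambda)=2\ell(\lambda)\le m-2$, and $K_\lambda$ is again available. The remaining difficulty is the split partitions produced in Step 1, whose odd split parts raise $C(\lambda)$. For these we will choose the split parameter $h$ to be even whenever possible, or replace a pair of parts $3,3$ by the vector $\Delta\in V_{4,6}$, which uses only $4$ copies of $x$ instead of $6$. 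The constraint $C(\lambda)\le m-1$ is tightest when $m=6$: then $\Delta$ together with short even-part partitions does not fit, which is why $m=6$ is excluded here. We will verify the small values $m\in\{3,4,5\}$ directly; for $m=4$, where $L_2=0$, only $d=2$ contributes and $R_k$ counts even partitions with one part. For $m\ge7$ the counting has enough room to go through.
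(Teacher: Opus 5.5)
Your overall architecture matches the paper's. It uses first copies $K_\lambda$ for every $\lambda$ with $M(\lambda)>0$ (your check that $C(\lambda)\le m-1$ there is exactly right), extra copies of the form $K_{T_h(\lambda)}$, a special vector built from $\Delta$, and independence via distinct leading monomials. The gap is in the extra copies: the split parameter you propose makes the assignment non-injective.

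If you split as in Proposition~\ref{prop:R-positive} ($p\mid h$, $\gcd(h/p,H/p)=1$), then $H(\nu)=p$ and $\ell(\nu)=\ell(\lambda)+1\le m/\phi\le m/p-1$. So $\nu\in\mathcal S_p$ and $M(\nu)=1$. That is, $\nu$ is itself a member of your multiset, and $K_\nu$ is already spent as its own first copy. Your Step~3 remedy of taking $h$ even has the same defect when $H$ is even: then $2\mid H(\nu)$. For instance, if $\gcd(h,H)=2$ one gets $H(\nu)=2$ and $M(\nu)=1$, because $2(\ell(\nu)+1)\le 2m/3+2\le m$.

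The missing idea is to choose $h\ge2$ with $2h<H$ and $\gcd(h,H)=1$. Then $H(\nu)=1$, hence $M(\nu)=0$. So the leading partition of $K_\nu$ cannot coincide with any first-copy leading partition, since those all have $H>1$. Lemma~\ref{lem:split} then separates splits coming from different sources.

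With that choice, the things you wave through as ``enough room'' become real conditions that must be checked.
\begin{itemize}
\item The number of admissible $h$ is $\tfrac12\varphi(H)-1$. This is $\ge 2^{s-1}-1\ge M(\lambda)-1$ for every squarefree $H$ with $s\ge2$ except $H=6$, where it is $0$.
\item When $H$ is even, a coprime $h$ is odd, so both new parts are odd and $C(\nu)=2\ell(\lambda)+4$. You need $\ell(\lambda)+1\le m/\phi$ from Lemma~\ref{lem:multiplicity}(ii) to get $C(\nu)\le 2m/\phi+2\le m-1$. For $\phi=3$ this is precisely where $m\ne6$ (hence $m\ge12$) is used.
\item The leftover case $H=6$ (where $M(\lambda)=2$ and $m/6\le\ell(\lambda)\le m/3-1$) needs its own treatment. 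The paper takes $h=5$ when $\lambda_1\ge12$. For $\lambda=(6^{\ell})$ it uses the vector $x_0^{m/3-1}\Delta^{m/3-\ell}f_6^{2\ell-m/3}$. Its leading partition $(6^{2\ell-m/3},3^{2m/3-2\ell})$ has $H=3$ and length $m/3$, hence $M=0$ and no collision.
\end{itemize}

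Your use of $\Delta$ to ``replace a pair of parts $3,3$'' does not identify this case or explain why the resulting leading monomial is new. So, as written, the proposal does not yet establish $C_k\ge R_k$.
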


\begin{proof}
For each $\lambda\in\mathcal P$ with $M(\lambda)>0$ and each
$i\in[1, M(\lambda)]$, we will construct a nonzero polynomial
\[
 v_{\lambda,i}\in
 V_{m-1,\wt(\lambda)}\cap\ker\mathcal D
\]
such that all such assigned polynomials have distinct leading monomials.
The polynomials of any fixed weight will then be linearly independent.
Once the construction is complete, for $0\le k\le\delta/2$, by Lemma~\ref{lem:kernel}, we have

\[
 \begin{aligned}
 C_k
 &=\dim_{\Q}\ker\bigl(
     \mathcal D:V_{m-1,k}\longrightarrow V_{m-1,k-1}\bigr)\\
 &\ge
   \sum_{\substack{\lambda\in\mathcal P\\\wt(\lambda)=k}}
        \max\{M(\lambda),0\}\\
 &\ge
   \sum_{\substack{\lambda\in\mathcal P\\\wt(\lambda)=k}}
        M(\lambda)\\
 & =R_k.
 \end{aligned}
\]
It therefore remains to construct such polynomials and distinguish
their leading monomials.

Suppose $m\ne6$. For each partition $\lambda$, we first assign a polynomial in terms of the following rule: For $\lambda=\varnothing$, define
\[
 v_{\lambda,1}:=x_0^{m-1}.
\]
For a nonempty $\lambda\in\mathcal P$ with $M(\lambda)>0$, put
$H=H(\lambda)$. By Lemma~\ref{lem:multiplicity} (i), the smallest
prime divisor $p$ of $H$ satisfies
$\ell(\lambda)\le m/p-1$. Moreover,
$H\mid\lambda_j$ and $\lambda_j\ge p\ge2$ for every
$j\in[1,\ell(\lambda)]$. Hence, for $C(\lambda)$ defined in (\ref{def+C+labta}), we have
\[
 \begin{cases}
 C(\lambda)=2\ell(\lambda)\le m-2,&p=2,\\
 C(\lambda)\le3\ell(\lambda)
       \le3(m/p-1)\le m-3,&p\ge3.
 \end{cases}
\]
So, $C(\lambda)\leq m-1$, which satisfies the condition of definition of $C(\lambda)$. Hence we can define a polynomial $K_\lambda$ by (\ref{eq:generl-vector}).  Take $v_{\lambda,1}:=K_\lambda$, and we have
$v_{\lambda,1}\in V_{m-1,\wt(\lambda)}\cap\ker\mathcal D$. In addition, its leading monomial is
\[
 x_0^{m-1-\ell(\lambda)}
       \prod_{j=1}^{\ell(\lambda)}x_{\lambda_j},
\]
which is determined by $\lambda$. Therefore, for different $\lambda$, such first polynomials have
distinct leading monomials.

Assume $M(\lambda)\ge2$. In what follows we will assign polynomials to $i\in[2, M(\lambda)]$ additional copies and do it by the following two cases: 

\smallskip\noindent\textbf{Case 1.} $H\ne6$.

We first show that there are enough splitting parameters (at least $M(\lambda)-1$). Let $s$
be the number of distinct prime divisors of $H$, and define
\[
 \mathcal A_\lambda
 :=\{h\in\Z_{\ge2}:2h<H,\ \gcd(h,H)=1\}.
\]

We have the next claim.
\begin{cla}
\[
 |\mathcal A_\lambda| \ge M(\lambda)-1.
\]
\end{cla}
\begin{proof}
By Lemma~\ref{lem:multiplicity}(ii) and (iv),
$s\ge2$ and $M(\lambda)\le2^{s-1}$.
Pairing the reduced residues $h$ and $H-h$ gives
\[
 |\mathcal A_\lambda|
 =\frac12\prod_{\substack{q\mid H\\q\text{ prime}}}(q-1)-1
 \ge2^{s-1}-1
 \ge M(\lambda)-1.
\]
Indeed, if $H$ is odd, then every factor $q\geq3$.
If $H$ is even, then one factor $q=2$, but
$H\ne6$ and $s\ge2$ ensure a prime divisor $q\ge5$ and every remaining $q\geq3$.
\end{proof}
Thus, from this calim, we let $h_2(\lambda)<\cdots<h_{M(\lambda)}(\lambda)$ be the first
$M(\lambda)-1$ elements of $\mathcal A_\lambda$. We now verify that each $h_i(\lambda)$ chosen produces an admissible
polynomial. We claim:
\begin{cla}
Fix $i\in[2, M(\lambda)]$ and let $h=h_i(\lambda)$ and $\nu=T_h(\lambda)$ as in
\eqref{eq:partition-split}. Then there exists a $v_{\lambda,i}
 \in V_{m-1,\wt(\lambda)}\cap\ker\mathcal D.$
\end{cla}
\begin{proof}
Since
\[
 2\le h\,\, {\text{and}}\,\, 2h<H\le\lambda_1\le n-m,
\]
both $h$ and $\lambda_1-h$ lie in $\{2,\ldots,n-m\}$.
The unchanged entries of $\lambda$ lie in this set as well.
Let $\phi$ be the second-smallest prime divisor of
$H$. By Lemma~\ref{lem:multiplicity}(ii), we obtain
$\ell(\lambda)+1\le m/\phi$. We begin to consider the following two cases:

(i) If $H$ is odd, then $\phi\ge5$ and in terms of the definition (\ref{def+C+labta}) of $C(\lambda)$, we have 
\[
 C(\nu)\le3(\ell(\lambda)+1)
       \le\frac{3m}{\phi}\le m-1.
\]

(ii) If $H$ is even, then the entries of $\lambda$ are even and $h$
is odd. Splitting $\lambda_1$ replaces one even entry by two odd
entries, we obtain
\[
 C(\nu)=2(\ell-1)+3\times2=2\ell(\lambda)+4
       \le\frac{2m}{\phi}+2\le m-1,
\]
where the last inequality follows from the fact: if $\phi=3$, then $6\mid m$ and $m\ne6$
give $m\ge12$; if $\phi\ge5$, then $2\phi\mid m$. 
Also, we obtain
\[
 \ell(\nu)=\ell(\lambda)+1\le m/\phi\le m-1,
 \quad \nu_1\le n-m.
\]
Thus $\nu\in\mathcal P$ and $\nu_1\le n-m$ satisfies the condition of the definition (\ref{def+C+labta}) of $C(\lambda)$. So, by (\ref{eq:generl-vector}), we define  assign
\[
 K_\nu
 \in V_{m-1,\wt(\lambda)}\cap\ker\mathcal D.
\]
So we take $v_{\lambda,i}:=K_\nu$. This completes the claim.
\end{proof}

Finally, we show that different $v_{\lambda,i}$ has the different leading monomials, which implies that they are different. Let
$\nu=T_{h_i(\lambda)}(\lambda)$ be an assigned split. Since
$\ell(\nu)=\ell(\lambda)+1$, the leading monomial of $K_\nu$
differs from that of the first-copy polynomial $K_\lambda$.
For a first-copy polynomial indexed by $\lambda'$ ($\ne\lambda$), we have
\[
 H(\nu)
 =\rad\bigl(\gcd(r,\lambda_1,\ldots,
                    \lambda_{\ell(\lambda)},h_i(\lambda))\bigr)
 =\gcd(H(\lambda),h_i(\lambda))=1,
\]
whereas $H(\lambda')>1$. The latter follows from
Lemma~\ref{lem:multiplicity}(i) when $\lambda'$ is nonempty, and from
$H(\varnothing)=\rad (r)>1$ otherwise. Thus a split polynomial has
a different leading monomial from every first-copy polynomial.

For a fixed $\lambda$ and $2\le i<j\le M(\lambda)$, the smallest
entries of $T_{h_i(\lambda)}(\lambda)$ and
$T_{h_j(\lambda)}(\lambda)$ are
$h_i(\lambda)$ and $h_j(\lambda)$, respectively. Since they are distinct,
the corresponding leading monomials are distinct. Finally, splits
arising from distinct source partitions cannot coincide: by
Lemma~\ref{lem:split}, their common image would determine a unique
source partition. Hence all assigned leading monomials are distinct.

\smallskip\noindent\textbf{Case 2.} $H=6$.

We first determine the number of additional copies. Since
$6\mid r$, $r\mid m$ and $m\ne6$, we have $m\ge12$. Formula
\eqref{eq:partition-multiplicity} becomes
\begin{eqnarray}\label{eq+M+lanbt+case}
M(\lambda)
 =\mathbf1_{2(\ell(\lambda)+1)\le m}
  +\mathbf1_{3(\ell(\lambda)+1)\le m}
  -\mathbf1_{6(\ell(\lambda)+1)\le m}.
\end{eqnarray}
It follows from $M(\lambda)\ge2$ and (\ref{eq+M+lanbt+case}) that
\begin{eqnarray}\label{ineq+M+lanbta}
 M(\lambda)=2,\quad
 \frac m6\le\ell(\lambda)\le\frac m3-1.
\end{eqnarray}

In what follows we divide the remaining proof into two cases:

\textbf{Case 2.1}: $\lambda_1\ge12$. So we can use a split. Take $h=5$ and let 
$\nu=T_5(\lambda)$. We have inequalities
\[
 0<h<6,\quad \lambda_1\ge12>2h,\quad 6\nmid 2h.
\]
Obviously, these inequalities satisfy the conditions of Lemma~\ref{lem:split}. So, by computing for $H=6$, we have every entry of $\nu$ lies between
$5$ and $n-m$, and
\[
 C(\nu)=2(\ell-1)+2\times 3=2\ell (\lambda)+4
       \le\frac{2m}{3}+2\le m-1,\quad
 \ell(\nu)=\ell(\lambda)+1
       \le\frac m3\le m-1.
\]
Therefore $\nu\in\mathcal P$, and we can define $K_\nu$ by (\ref{eq:generl-vector}). So, taking $v_{\lambda,2}=K_\nu$, we have
\[
 v_{\lambda,2}
 \in V_{m-1,\wt(\lambda)}\cap\ker\mathcal D.
\]
Here $H(\nu)=\gcd(6,5)=1$. As in Case~1,
by Lemma~\ref{lem:split}, we distinguish this leading partition from
all other assigned splits.

\textbf{Case 2.2}: $\lambda_1<12$. Since every entry is divisible
by $6$, we have $\lambda=(6^{\ell(\lambda)})$.
As $n-m\ge m\ge12$, we can define $f_6$ and $\Delta$ in (\ref{eq:even-vector}) and (\ref{eq:delta-vector}), respectively.
Set
\[
 v_{\lambda,2}
 :=x_0^{\,m/3-1}
   \Delta^{\,m/3-\ell(\lambda)}
   f_6^{\,2\ell(\lambda)-m/3}.
\]
In the following, we will prove $ v_{\lambda,2}\in \ker\mathcal D $.
The bounds in (\ref{ineq+M+lanbta}) on $\ell(\lambda)$ show that all exponents are
nonnegative integers. From Section~\ref{sec:kernel}, we have
$\mathcal D x_0=\mathcal D\Delta=\mathcal D f_6=0$;
the derivation rule therefore gives
$\mathcal Dv_{\lambda,2}=0$. Moreover,
\[
 \begin{aligned}
\deg( v_{\lambda,2})&= (m/3-1)+4(m/3-\ell(\lambda))
       +2(2\ell(\lambda)-m/3)=m-1,\\
wt( v_{\lambda,2})&= 6(m/3-\ell(\lambda))
       +6(2\ell(\lambda)-m/3)
       =6\ell(\lambda)=\wt(\lambda).
 \end{aligned}
\]
Consequently,
$v_{\lambda,2}\in V_{m-1,\wt(\lambda)}$.
Its leading term is
\[
 \operatorname{LT}(v_{\lambda,2})
 =2^{\,2\ell(\lambda)-m/3}
  x_0^{\,2m/3-1}
  x_3^{\,2m/3-2\ell(\lambda)}
  x_6^{\,2\ell(\lambda)-m/3}.
\]
Its leading partition is
\[
 \rho_\lambda
 :=(6^{\,2\ell(\lambda)-m/3},
    3^{\,2m/3-2\ell(\lambda)}).
\]
The multiplicity of $3$ is at least $2$. Moreover,
\[
 \ell(\rho_\lambda)=m/3\le m-1,\quad
 (\rho_\lambda)_1\le6\le n-m.
\]
Thus $\rho_\lambda\in\mathcal P$ and $H(\rho_\lambda)=3$.
Since $3(\ell(\rho_\lambda)+1)=m+3>m$,
By \eqref{eq:partition-multiplicity}, we have
$M(\rho_\lambda)=0$. This leading partition differs from those
of the first polynomials, which have positive multiplicity, and
from those of the split polynomials, which have $H=1$.
Different values of $\ell(\lambda)$ give different multiplicities
of $3$ in $\rho_\lambda$, so these exceptional leading
monomials are distinct as well.

This completes the proof. 
\end{proof}

\subsection{Positivity for \texorpdfstring{$m=6$}{m=6} from the KOH identity}
\label{sec:six}

For the nonnegativity of $R_k$, by Proposition~\ref{prop:domination}, it reduces to the remaining cases (i) $m=6$ and $r\in\{2,3\}$, (ii)
$m=6$ and $6\mid n$. In this subsection, we will complete the proofs of the remaining cases.

\begin{prop}\label{prop:domination+m6+2+3}
If $m=6$ and $r\in\{2,3\}$, then
$ C_k\ge R_k$ for $0\le k\le\delta/2.$
\end{prop}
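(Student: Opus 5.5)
The plan is to reuse the first-copy construction from the proof of Proposition~\ref{prop:domination}. When $r$ is prime, no additional copies are needed, so the obstruction that forced $m\ne6$ there (the case $H=6$, $M(\lambda)=2$) never arises. Concretely, for $r\in\{2,3\}$ the only divisor $d>1$ of $r$ is $d=r$. Hence $R(q)=-\mu(r)R_r(q)=R_r(q)$, and by \eqref{eq:partition-multiplicity} we have $M(\lambda)=\mathbf 1_{\mathcal S_r}(\lambda)\in\{0,1\}$ for every $\lambda\in\mathcal P$. So
\[
 R_k=\bigl|\{\lambda\in\mathcal S_r:\wt(\lambda)=k\}\bigr|,
\]
and by Lemma~\ref{lem:kernel} with $a=n-6$ and $b=5$ it suffices to exhibit, for each $\lambda\in\mathcal S_r$ with $\wt(\lambda)=k\le\delta/2$, a vector of $V_{5,k}\cap\ker\mathcal D$. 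These vectors should have pairwise distinct leading monomials.

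I will take $K_\lambda$ from \eqref{eq:generl-vector} with $b=5$. I first check admissibility case by case.

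If $r=2$, then $\mathcal S_2$ consists of partitions with at most $m/2-1=2$ parts, all even and at most $n-6$. The parts lie in $\{2,\ldots,a\}$, and $C(\lambda)=2\ell(\lambda)\le4\le5$.

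If $r=3$, then $\mathcal S_3$ consists of $\varnothing$ and the one-part partitions $(3j)$ with $3\le 3j\le n-6$. Here $C(\lambda)\le3\le5$; note that $3j$ may be odd, which is exactly why the odd vectors $g_\ell$ with $C=3$ are needed.

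In both cases $a=n-6\ge6$ because $n\ge2m=12$, so $f_\ell$ and $g_\ell$ are defined for all relevant $\ell\le a$. Then $K_\lambda\in V_{5,\wt(\lambda)}\cap\ker\mathcal D$ has leading monomial $x_0^{5-\ell(\lambda)}\prod_j x_{\lambda_j}$. Distinct $\lambda$ give distinct leading monomials, so the $K_\lambda$ of a fixed weight are linearly independent. This gives $C_k\ge R_k$ for $0\le k\le\delta/2$.

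The only point needing care is the bookkeeping: the degree bound $C(\lambda)\le b=m-1=5$, and the part bound $\lambda_1\le a$ (automatic since $\mathcal S_r\subseteq\mathcal P$). I expect no genuine obstacle. If one preferred to avoid kernel vectors, a fallback is to compare generating functions directly: express $(1-q)\qbinom{n-1}{5}$ through the KOH identity as a nonnegative sum of products of Gaussian polynomials. One then isolates a term dominating $\qbinom{n/2-1}{2}_{q^2}$, respectively $\qint{(n-3)/3}_{q^3}$, up to degree $\delta/2$. I expect this route is what the paper needs for the companion case $6\mid n$, where $M(\lambda)=2$ genuinely occurs.
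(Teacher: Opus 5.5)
Your proof is correct and is essentially the paper's argument. Since $r$ is prime, $M(\lambda)=\mathbf 1_{\mathcal S_r}(\lambda)\in\{0,1\}$, so only the first-copy vectors $K_\lambda$ are needed; the bounds $C(\lambda)\le 4$ (for $r=2$) and $C(\lambda)\le 3$ (for $r=3$) make them admissible with $b=5$, and their distinct leading monomials give $C_k\ge R_k$ via Lemma~\ref{lem:kernel}. As you anticipated, the paper uses the KOH route only for the companion case $6\mid n$.
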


\begin{proof}
Let $m=6$ and $r\in\{2,3\}$. The following proof is similar to that of $m\neq6$. For every nonempty
$\lambda\in\mathcal P$, we have
$H(\lambda)\in\{1,r\}$ and
$M(\lambda)\in\{0,1\}$; also $M(\varnothing)=1$.
If $M(\lambda)=1$, then by Lemma~\ref{lem:multiplicity} (i), we obtain
\[
 \begin{cases}
 \ell(\lambda)\le2,\quad
 C(\lambda)=2\ell(\lambda)\le4,&r=2,\\
 \ell(\lambda)\le1,\quad
 C(\lambda)\le3\ell(\lambda)\le3,&r=3.
 \end{cases}
\]
So $C(\lambda)\leq5=m-1$. Then we can define a $K_\lambda$ with
$v_{\varnothing,1}=x_0^5$ in terms of (\ref{eq:generl-vector}). Hence, take $v_{\lambda,1}:=K_\lambda$. These polynomials have distinct
leading monomials. So $C_k\ge R_k$ for $0\le k\le\delta/2.$
\end{proof}

For the case $m=6$ and $6\mid n$, we will use the KOH identity 
\cite{OHara1990,Zanello2015,Zeilberger1989}.

\begin{thm}[KOH identity]\label{thm:KOH}
Let $a\ge b\ge2$ be integers. For $\lambda\vdash b$, set
$ Y_j=\sum_{i=1}^j\lambda_i$ for $j\ge0.$
Then
\begin{equation}\label{eq:KOH}
 \qbinom{a+b}{b}
 =\sum_{\lambda\vdash b}
 q^{2\sum_{i\ge1}\binom{\lambda_i}{2}}
 \prod_{j=1}^{\ell(\lambda)}
 \qbinom{j(a+2)-Y_{j-1}-Y_{j+1}}{\lambda_j-\lambda_{j+1}}
\end{equation}
and every nonzero summand is symmetric and unimodal
about $ab/2$.
\end{thm}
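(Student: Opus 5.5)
Since the identity \eqref{eq:KOH} itself is classical, I would prove the two assertions separately: first the polynomial identity, then the symmetry and unimodality of each summand. For the identity I would follow Zeilberger's algebraic route \cite{Zeilberger1989} rather than O'Hara's bijection \cite{OHara1990}.

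\emph{Step 1: reformulate.} Replace the summation over $\lambda\vdash b$ by a summation over the multiplicity vector. Write $d_j=\lambda_j-\lambda_{j+1}\ge0$, so that $\lambda$ is recovered from $(d_1,d_2,\ldots)$ with $\sum_j j\,d_j=b$. Then $Y_j$ and $2\sum_i\binom{\lambda_i}{2}$ are quadratic forms in the $d_j$, and the right-hand side of \eqref{eq:KOH} becomes a multiple sum of products of Gaussian polynomials $\qbinom{N_j}{d_j}$ with
\[
 N_j=j(a+2)-Y_{j-1}-Y_{j+1}.
\]

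\emph{Step 2: prove the identity by iteration.} Both sides are polynomials in $q$ that depend on $a$ through $q^{a}$-type shifts. I would establish the identity by iterating the $q$-Chu--Vandermonde convolution
\[
 \qbinom{A+B}{k}=\sum_i q^{(A-i)(k-i)}\qbinom{A}{i}\qbinom{B}{k-i}.
\]
Concretely, peel off the largest block $d_{\ell}$ and sum it out. This should produce the KOH sum for a smaller partition structure. Equivalently, I would show that both sides satisfy the recurrence of $\qbinom{a+b}{b}$ in $a$, namely $\qbinom{a+b}{b}=\qbinom{a+b-1}{b}+q^{a}\qbinom{a+b-1}{b-1}$, with matching initial values. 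In that case one would argue by induction on $b$ with $a$ free, using that each $N_j$ is affine in $a$ with slope $j$.

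I expect this summation bookkeeping to be the main obstacle. Zeilberger handles it by converting the whole sum into a constant-term identity, then verifying the constant-term identity by partial fractions or $q$-Pfaff--Saalsch\"utz. If the direct induction becomes unwieldy, I would try that constant-term reformulation first.

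\emph{Step 3: symmetry and unimodality of each summand.} This part is routine. Each factor $\qbinom{N_j}{d_j}$ is symmetric and unimodal about $d_j(N_j-d_j)/2$; for instance, apply Lemma~\ref{lem:kernel} and \eqref{eq:kernel-reciprocity} with $a=N_j-d_j$ and $b=d_j$, which requires $N_j\ge d_j$ for a nonzero factor. A product of symmetric unimodal polynomials with nonnegative coefficients is again symmetric and unimodal, about the sum of the centers. Multiplying by the monomial $q^{2\sum\binom{\lambda_i}{2}}$ only shifts the center.

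It therefore remains to verify the quadratic identity
\[
 2\sum_{i}\binom{\lambda_i}{2}+\frac12\sum_{j}d_j(N_j-d_j)=\frac{ab}{2}.
\]
I would check this by expanding $N_j$ in terms of the conjugate partition. Using $\sum_j j d_j=b$, the terms linear in $a$ sum to $ab/2$. The remaining $a$-free terms should telescope against $2\sum_i\binom{\lambda_i}{2}$ via $Y_{j+1}-Y_j=\lambda_{j+1}$; this is an algebraic check with no conceptual difficulty.
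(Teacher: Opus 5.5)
\textbf{Gap: Step~2 does not prove the identity.} It lists strategies (``this should produce'', ``if the direct induction becomes unwieldy'') but carries none of them out, and neither one closes as described.

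\emph{Pascal route.} Since the $j$th top $N_j$ drops by $j$ when $a\mapsto a-1$, the difference $\mathrm{RHS}(a,b)-\mathrm{RHS}(a-1,b)$ is a sum over $\lambda\vdash b$ of $q^{2\sum\binom{\lambda_i}{2}}\bigl(\prod_j\qbinom{N_j}{d_j}-\prod_j\qbinom{N_j-j}{d_j}\bigr)$. By contrast, $q^{a}\,\mathrm{RHS}(a-1,b-1)$ is a sum over $\mu\vdash b-1$. There is no termwise matching between these, and you supply no identity converting one into the other. There is also no trivial base case. The theorem assumes $a\ge b$, so an induction in $a$ first requires extending \eqref{eq:KOH} to small $a$. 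There negative tops occur (for example $a=0$, $b=3$, $\lambda=(2,1)$ gives $N_1=-1$), and a convention must be fixed.

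\emph{Peeling route.} Removing the block $d_\ell=\lambda_\ell=t$ does turn the remaining factors into KOH factors for $(a-2t,\,b-\ell t)$. However, the peeled factor $\qbinom{N_\ell}{t}$, with $N_\ell=\ell(a+2)-Y_{\ell-1}-b$, still depends on $Y_{\ell-1}$ of the remaining partition, and that partition has length less than $\ell$. So the sum over $t$ is not a plain $q$-Chu--Vandermonde sum, and it does not visibly return a smaller KOH sum.

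The known proofs rest on an additional idea that your sketch does not supply: O'Hara's structural decomposition of partitions in a box, which Zeilberger turns into this algebraic identity. As written, your treatment of \eqref{eq:KOH} amounts to a citation. That is exactly the paper's treatment: it gives no proof of Theorem~\ref{thm:KOH} and quotes it from \cite{OHara1990,Zanello2015,Zeilberger1989}. Either say so explicitly and drop Step~2, or actually reproduce one of the published proofs.

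\textbf{Step~3 is correct.} It is a genuine addition, since the paper also only quotes the symmetry/unimodality clause.

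\emph{All factors are genuine Gaussian polynomials.} The hypothesis $a\ge b$ gives $N_1=a+2-\lambda_1-\lambda_2\ge2$, and $N_j\ge 2(a+2)-2b\ge4$ for $j\ge2$. Hence a summand is nonzero exactly when $N_j\ge d_j$ for all $j$. Factors with $d_j=0$ or $N_j=d_j$ equal $1$; these fall outside the range of Lemma~\ref{lem:kernel}, but they are trivially symmetric with center $0$.

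\emph{The center computation closes as you predict.} From $Y_{j-1}+Y_{j+1}=2Y_j-d_j$ and the summation by parts $\sum_j d_jY_j=\sum_j\lambda_j^2$, one gets $\frac12\sum_j d_j(N_j-d_j)=\frac{(a+2)b}{2}-\sum_j\lambda_j^2$. Also $2\sum_i\binom{\lambda_i}{2}=\sum_i\lambda_i^2-b$. Adding these gives a total of $ab/2$.
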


\begin{prop}\label{prop:domination+m6+12}
Let$ n\geq 12$, $6\mid n$ and $r=\gcd(n,6)>1$.
Then $C_k\ge R_k$ for $0\le k\le5(n-6)/2$.
\end{prop}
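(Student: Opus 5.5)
Here $m=6$ and $6\mid n$, so $r=6$ and the M\"obius correction is completely explicit. Since $\mu(2)=\mu(3)=-1$ and $\mu(6)=1$, definition (\ref{def+R}) gives
\[
 R(q)=R_2(q)+R_3(q)-R_6(q)
 =\left.\qbinom{n/2-1}{2}\right|_{q\rightarrow q^2}+\qint{n/3-1}\big|_{q\rightarrow q^3}-1 .
\]
I would first record $R_k$ in closed form. The first term is supported on even $k=2j$, where it counts partitions of $j$ into at most two parts each at most $(n-6)/2$, so it is at most $\lfloor j/2\rfloor+1$. The second term minus $1$ contributes $1$ exactly when $3\mid k$ and $3\le k\le n-6$. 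Hence
\[
 R_k\le \mathbf 1_{2\mid k}\bigl(\lfloor k/4\rfloor+1\bigr)+\mathbf 1_{3\mid k,\ k>0}.
\]
So $R_k$ grows only linearly, with a plateau once $k$ exceeds about $n-6$, and it vanishes for $k>2(n/2-3)\cdot 2$.

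On the other side, $P(q)=\qbinom{n-1}{5}$. I would apply Theorem~\ref{thm:KOH} with $a=n-6\ge 6$ and $b=5$, so that $ab/2=5(n-6)/2$ is exactly the range in the statement. Every nonzero KOH summand is symmetric and unimodal about $ab/2$, so $(1-q)$ times each summand has nonnegative coefficients for $0\le k\le 5(n-6)/2$. Consequently, for any subset $\Lambda$ of partitions of $5$,
\[
 C_k\ \ge\ \sum_{\lambda\in\Lambda}[q^k](1-q)\,T_\lambda(q),
\]
where $T_\lambda$ denotes the $\lambda$-summand in (\ref{eq:KOH}).

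I would choose the summands so that the right side is computable and already dominates the bound on $R_k$:
\begin{itemize}
\item[(a)] $\lambda=(1^5)$ contributes $\qint{5a+2}$. Its $(1-q)$-image is $1$ at $k=0$ and covers $R_0=1$.
\item[(b)] $\lambda=(2,1^3)$ contributes $q^2\qint{a-1}\,\qint{4a-1}$ (up to the exact KOH arguments). Its $(1-q)$-image is $q^2\qint{a-1}$ in the relevant range, i.e.\ $1$ for $2\le k\le a$.
\item[(c)] $\lambda=(2,2,1)$ and $\lambda=(3,1,1)$ contribute $q^4\qint{\cdot}\qint{\cdot}\qbinom{\cdot}{\cdot}$-type products and $q^6$-shifted products. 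Their $(1-q)$-images are convolutions of a block of $1$'s with a unimodal polynomial, which grow like $k$ after a shift of $4$ or $6$.
\end{itemize}
Summing (a)--(c) should give a lower bound of roughly $1+\lfloor (k-2)/2\rfloor$ for $k\ge2$, plus an extra linear term from $k\ge 6$. This beats $\lfloor k/4\rfloor+2$ for all $k$ beyond a small threshold, and it persists up to $5a/2$ because each of these products is still in its increasing half there.

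For the remaining small $k$, say $k\le 8$, I would compare $C_k$ directly. By Lemma~\ref{lem:kernel}, $C_k$ is the difference of counts of partitions of $k$ and $k-1$ with at most $5$ parts and parts at most $n-6$. Since $n-6\ge 6$, for $k\le 6$ the part-size cap does not matter, and the values $1,0,1,1,2,2,3$ match $R_k$ exactly (for instance $C_6=3=R_6$). Alternatively, I could exhibit kernel vectors $K_\lambda$ with $C(\lambda)\le5$ as in Proposition~\ref{prop:domination+m6+2+3}, together with $f_2,f_4,f_6,g_3$ and products. The cases $n=12,18$, where the KOH arguments are small, would be checked separately by expanding the few polynomials.

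The main obstacle is the middle step. One must pin down the exact KOH $q$-binomial arguments $j(a+2)-Y_{j-1}-Y_{j+1}$ for the chosen $\lambda$, and verify that the resulting lower bound dominates the bound on $R_k$ uniformly in $n$ on the entire interval up to $5(n-6)/2$, especially at the tight even values $k\equiv0\pmod 6$ where both $R_2$ and $R_3$ contribute. If three summands are not enough there, I would add $\lambda=(2,2,1)$'s companion $(3,2)$ and rely on its $q^{8}$-shifted quadratic growth.
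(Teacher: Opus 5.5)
Your route is the paper's: the explicit correction $R(q)=\qbinomd{n/2-1}{2}{2}+[n/3-1]_{q^3}-1$, KOH with $a=n-6$, $b=5$, the same four summands $(1^5),(2,1^3),(2,2,1),(3,1,1)$, and all other summands discarded by symmetric unimodality. However, the comparison step, which you yourself flag as the main obstacle, does not go through as you describe it.

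On $0\le k\le 5a/2$ the $(1-q)$-images of these four summands are exactly $\mathbf 1_{k=0}$, $\mathbf 1_{2\le k\le n-6}$, $\mathbf 1_{4\le k\le 2n-12}$ and $[q^{k-6}]\qbinom{n-8}{2}$. Two details of your sketch are off here. The first summand is $\qint{5a+1}$, not $\qint{5a+2}$. More importantly, $(1-q)q^4\qint{2n-15}\qint{3n-21}=q^4\qint{2n-15}-q^{3n-17}\qint{2n-15}$ is a constant block of ones, not something growing like $k$.

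The last term is symmetric about $k=n-4$ and vanishes for $k>2n-14$. Being in the increasing half of $q^6\qbinom{n-8}{2}\qint{3n-21}$ only makes its first difference nonnegative, not large. So for $k=2h$ with $n/2-2\le h\le n-7$ your lower bound is only $n-h-5$, and at $k=2n-12$ it is $1$. Meanwhile your upper bound $\lfloor k/4\rfloor+2$ for $R_k$ keeps growing. For example, for $n=24$ and $k=34$ you get $C_{34}\ge2$ against the bound $9$.

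The fallback $(3,2)$ does not rescue this. Its $(1-q)$-image is $q^8(1-q^{a-3})\qbinom{2a-4}{2}$, whose coefficients are at most about $a/2$. For $n=24$ and $k=36$ the five summands give $9$ against your bound $11$.

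The missing ingredient is the exact shape of $R_k$ past $k=n-6$. There the $q^3$-part has stopped, and $\qbinomd{n/2-1}{2}{2}$ is symmetric about $k=n-6$, so it decreases rather than plateaus. Thus
$R_{2h}=\lfloor\min(h,n-h-6)/2\rfloor+1+\mathbf 1_{3\mid h,\ 2h\le n-6}$, and for odd $k$, $R_k=\mathbf 1_{3\mid k,\ 3\le k\le n-6}$.

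With this exact form, the four summands already suffice, which is what the paper does:
odd $k$ is covered by $\mathbf 1_{2\le k\le n-6}$;
$k=0,2,4$ are checked directly;
$C_{2h}\ge h\ge R_{2h}$ for $3\le h\le n/2-3$;
$C_{2h}\ge n-h-5\ge R_{2h}$ for $n/2-2\le h\le n-7$;
and $C_{2n-12}\ge1=R_{2n-12}$.

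Also, your small values do not match exactly: $C_5=2$ while $R_5=0$. They only dominate, which is all you need.
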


\begin{proof}
We first compute $R(q)$, then obtain a lower bound for $C_k$ from
four summands of \eqref{eq:KOH}. We finish by comparing this bound
with $R_k$ in odd and even degrees.

The hypotheses imply $\gcd(n,m)=6$. In the divisor
formula (\ref{def+R}) for $R(q)$, the divisors greater than $1$ are $2,3,6$, with
M\"obius values $-1,-1,1$, respectively. Thus
\begin{equation}\label{eq:six-correction}
 R(q)=\qbinomd{n/2-1}{2}{2}+[n/3-1]_{q^3}-1.
\end{equation}
For the KOH identity, taking $a=n-6$ and $b=5$ yields
\[
 P(q)=\qbinom{n-1}{5}=\qbinom{(n-6)+5}{5}.
\]
We only need to use the four summands of \eqref{eq:KOH} indexed by
$(1^5)$, $(2,1^3)$, $(2,2,1)$, and $(3,1,1)$ to bound $C_k$.
They are
\[
\begin{array}{c|l}
 \lambda & \text{summand}\\ \hline
 (1^5)   &\qint{5n-29}\\
 (2,1^3) &q^2\qint{n-7}\qint{4n-25}\\
 (2,2,1) &q^4\qint{2n-15}\qint{3n-21}\\
 (3,1,1) &q^6\qbinom{n-8}{2}\qint{3n-21}.
\end{array}
\]
For example, if $\lambda=(3,1,1)$, then
$(Y_0,Y_1,Y_2,Y_3,Y_4)=(0,3,4,5,5)$ and
$(\lambda_1-\lambda_2,\lambda_3-\lambda_4)=(2,1)$,
which gives the last row. To compute their contributions to
$C_k$ for $0\le k\le\delta/2$, multiply each summand by $1-q$:
\[
\begin{aligned}
 (1-q)\qint{5n-29}
  &=1-q^{5n-29},\\
 (1-q)q^2\qint{n-7}\qint{4n-25}
  &=q^2\qint{n-7}-q^{4n-23}\qint{n-7},\\
 (1-q)q^4\qint{2n-15}\qint{3n-21}
  &=q^4\qint{2n-15}-q^{3n-17}\qint{2n-15},\\
 (1-q)q^6\qbinom{n-8}{2}\qint{3n-21}
  &=q^6\qbinom{n-8}{2}-q^{3n-15}\qbinom{n-8}{2}.
\end{aligned}
\]
For these four first differences, their negative terms begin above the range $0\le k\le5(n-6)/2$, since
$n\ge12$ and
\[
 \frac{5(n-6)}2
 <3n-17
 =\min\{5n-29,4n-23,3n-17,3n-15\}.
\]
So we next restrict these four first differences to consider their former terms.
Every remaining summand of \eqref{eq:KOH} has nonnegative first
differences in the same range by the symmetry and unimodality of Gaussian  polynomials. Hnece, for $0\le k\le5(n-6)/2$, we have
\begin{equation}\label{eq:six-bound}
 C_k\ge \mathbf1_{k=0}
  +\mathbf1_{2\le k\le n-6}
  +\mathbf1_{4\le k\le2n-12}
  +[q^{k-6}]\qbinom{n-8}{2}.
\end{equation}

We now compare this lower bound with the coefficients of $R(q)$.
By \eqref{eq:six-correction}, $\deg R(q)=2n-12$, so $R_k=0$ for
$k>2n-12$. As the right-hand side of \eqref{eq:six-bound} is
nonnegative, it remains to consider $0\le k\le2n-12$ in terms of the following two cases.

\textbf{Case 1}: For odd $k$, combining \eqref{eq:six-correction} and \eqref{eq:six-bound} gives
\[
 R_k=\mathbf1_{\{3\mid k,\ 1\le k\le n-6\}}
 \le\mathbf1_{2\le k\le n-6}\le C_k.
\]
For the first three even degrees, the same equations give
$c_0\ge1$, $c_2\ge1$, and $c_4\ge2$, while
$(r_0,r_2,r_4)=(1,1,2)$.

\textbf{Case 2}: For the remaining even degrees, write $k=2h$ with
$3\le h\le n-6$. We compute two Gaussian coefficients in
\eqref{eq:six-bound} and \eqref{eq:six-correction} by
\eqref{eq:coeff+Gauss}. In the two counts, the partitions have at most
two parts and are represented by $(2h-6-v,v)$ and $(h-v,v)$,
respectively, with zero parts omitted:
\begin{align*}
 [q^{2h-6}]\qbinom{n-8}{2}
 &=\bigl|\{v\in\Z:\max(0,2h-n+4)\le v\le h-3\}\bigr|\\
 &=\min(h-2,n-h-6)
   &&(3\le h\le n-7),\\
 [q^{2h}]\qbinomd{n/2-1}{2}{2}
 &=\bigl|\{v\in\Z:\max(0,h-n/2+3)
                     \le v\le\lfloor h/2\rfloor\}\bigr|\\
 &=\left\lfloor\frac{\min(h,n-h-6)}2\right\rfloor+1
   &&(0\le h\le n-6).
\end{align*}
The second count and \eqref{eq:six-correction} now give
\[
 R_{2h}=\left\lfloor\frac{\min(h,n-h-6)}2\right\rfloor+1
          +\mathbf1_{\{3\mid h,\ 2h\le n-6\}}
 \qquad(3\le h\le n-6).
\]
We compare this formula with \eqref{eq:six-bound} by the following three cases:

\textbf{Case 2.1:} If $3\le h\le n/2-3$, then 
$ C_{2h}\ge h \ge \lfloor h/2\rfloor+1+\mathbf1_{3\mid h}= R_{2h}.$

\textbf{Case 2.2:}  If $n/2-2\le h\le n-7$, then
\[
 C_{2h}\ge n-h-5
 \ge\left\lfloor\frac{n-h-6}{2}\right\rfloor+1
 =R_{2h}.
\]

\textbf{Case 2.3:} If $h=n-6$, then combining \eqref{eq:six-correction} and \eqref{eq:six-bound}
gives $C_{2n-12}\ge1=R_{2n-12}$.

This covers every integer $0\le k\le5(n-6)/2$. The proof is complete.
\end{proof}

\section{Positivity of coefficients of $W(q)$}
\label{sec:reciprocal-positive}

In this section, we will prove $W(q)\ge_q0$.

\begin{prop}\label{prop:W-positive}
Let $n,m\in\Z$ satisfy $3\le m\le n/2$ and $r=\gcd(n,m)>1$.
Then $W(q)\ge_q0$.
\end{prop}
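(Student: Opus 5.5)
By the reflection identity in Lemma~\ref{lem:polynomiality}(ii), $W(q)=q^{\delta-n-m}W(q^{-1})$, so it suffices to prove $[q^k]W(q)\ge0$ for $0\le k\le(\delta-n-m)/2$. The plan is to show that in this lower half $S(q)$ essentially does not contribute. Then $W$ is, coefficientwise, a sum of coefficients of $R(q)$, and Proposition~\ref{prop:R-positive} applies.

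\emph{Step 1 (expansion).} Write $S(q)=\sum_k S_kq^k$. Expanding $1/(1-q^m)$ in the definition $W=(R-S)/(1-q^m)$ gives
\[
 [q^k]W(q)=\sum_{j\ge0}\bigl(R_{k-jm}-S_{k-jm}\bigr).
\]
This is legitimate because $W$ is a polynomial by Lemma~\ref{lem:polynomiality}(ii).

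\emph{Step 2 (support of $S$).} By (\ref{def+R}), $S(q)=-\sum_{d\mid r,\,d>1}\mu(d)q^{L_d}R_d(q)$, so $S_k=0$ for $k<\min_d L_d$. Since $L_d=\delta-n-\deg R_d$ and $\deg R_d=(m/d-1)(n-m)\le(m/2-1)(n-m)$ for every $d\ge2$, we get
\[
 L_d\ \ge\ \delta-n-\tfrac{(m-2)(n-m)}{2}
 \ =\ \frac{(m-1)(n-m)-(n-m)-m}{2}+\frac{n-m-n+m}{2}
 \ =\ \frac{\delta-n-m}{2}.
\]
Equality holds only when $d=2$ and $\deg R_2=(m-2)(n-m)/2$, that is, only when $2\mid r$. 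Hence for $k<(\delta-n-m)/2$ every $S_{k-jm}$ vanishes. Then $[q^k]W=\sum_j R_{k-jm}\ge0$ by Proposition~\ref{prop:R-positive}.

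\emph{Step 3 (the midpoint; main obstacle).} The only remaining case is $k=k_0:=(\delta-n-m)/2=L_2$ with $2\mid r$. In this case $S_{k_0}$ receives a contribution only from $d=2$. That contribution equals $-\mu(2)[q^0]R_2=1$; any other $d$ with $L_d=k_0$ would need $\deg R_d=\deg R_2$, which forces $d=2$. So $[q^{k_0}]W=\sum_{j\ge0}R_{k_0-jm}-1$, and it suffices to find one $j\ge0$ with $R_{k_0-jm}\ge1$.

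If $m\mid k_0$, take $j=k_0/m$: then $R_0=M(\varnothing)=1$. Otherwise I would find a partition $\lambda\in\mathcal S_2$ of weight $k_0-jm$ with $H(\lambda)=2$. Such a $\lambda$ has $M(\lambda)=1$ by (\ref{eq:partition-multiplicity}). Then I would bound the negative contributions at that weight using the injection from the proof of Proposition~\ref{prop:R-positive}, which gives $R_{k_0-jm}\ge1$. This needs $k_0-jm$ even and at most $\deg R_2$. Since $m$ is even, every $k_0-jm$ has the parity of $k_0$. So the key check is that $k_0$ is even, or, failing that, that some $d=2$-free term survives. I expect this parity and size bookkeeping at the single midpoint coefficient to be the main difficulty. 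If it fails, the fallback is a direct count of $[q^{k_0}]R_2$ against the $-\mu(d)$ terms for $d>2$, which have much smaller degree, in the spirit of the explicit computations in Proposition~\ref{prop:domination+m6+12}.
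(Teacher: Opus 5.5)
Your Steps 1 and 2 are correct and match the paper. The reflection $W(q)=q^{\delta-n-m}W(q^{-1})$ reduces the claim to $0\le k\le(\delta-n-m)/2$. Next, $L_d\ge(\delta-n-m)/2$, with equality only for $d=2$. So below the midpoint $[q^k]W(q)=\sum_{j\ge0}R_{k-jm}\ge0$. At $k_0=L_2$ with $2\mid r$ one gets $[q^{k_0}]W(q)=\sum_{j\ge0}R_{k_0-jm}-1$. (A minor slip: the middle expression in your display for $L_d$ is off by $m/2$, though $\delta-n-(m-2)(n-m)/2=(\delta-n-m)/2$ does hold.)

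\textbf{The gap is Step 3.} Apart from the case $m\mid k_0$, you never establish $R_{k_0-jm}\ge1$ for any $j$, and the route you sketch cannot give it as stated. Suppose you exhibit some $\lambda$ of weight $k_0$ with $H(\lambda)=2$ and $M(\lambda)=1$, and then invoke the injection of Proposition~\ref{prop:R-positive}. That only yields $R_{k_0}\ge0$, because the injection may map onto exactly that $\lambda$. Its images satisfy $H(\nu)=p$, the smallest prime of $H$ of the source partition, and $p$ can be $2$. You would have to show that $\lambda$ lies outside the image, or that no partition of that weight has negative multiplicity. The proposal does neither, and it leaves even the parity of $k_0$ as an open worry.

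\textbf{The missing idea is a degree comparison, which removes the negative terms entirely.} Parity is immediate: $k_0=L_2=(m/2-1)(n-m)-m$ is even, since $m$ and $n-m$ are even. For $d\mid r$ with $d\ge3$,
\[
 L_2-\deg R_d=m(n-m)\Bigl(\frac12-\frac1d\Bigr)-m\ge m\Bigl(\frac{n-m}{6}-1\Bigr).
\]
So when $n-m>6$, no $R_d$ with $d\ge3$ reaches degree $L_2$. Equivalently, no partition of weight $L_2$ has $M(\lambda)<0$. Hence
\[
 R_{L_2}=[q^{L_2}]R_2(q)=[q^{L_2/2}]\qbinom{n/2-1}{m/2-1}\ge1,
\]
because $0\le L_2/2\le\deg\qbinom{n/2-1}{m/2-1}$.

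When $n-m\le6$ this comparison can fail. Then $m$ and $n-m$ even with $4\le m\le n-m$ force $(n,m)\in\{(8,4),(10,4),(12,6)\}$, which must be checked directly. For $(8,4)$ and $(10,4)$ only $d=2$ matters. For $(12,6)$ the $d=3$ term does reach degree $6$, but with coefficient $-\mu(3)=+1$, giving $R_6=3$.

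Your ``fallback'' points in this direction. Without the explicit inequality and the small-case check, however, the midpoint coefficient, and hence $W(q)\ge_q0$, is not established.
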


\begin{proof}
Recall $\delta=(n-m)(m-1)$. Note by Lemma~\ref{lem:polynomiality} that $\deg W(q)=\delta-n-m$. If $\delta-n-m<0$, then the polynomial $W(q)$ must be zeros. If $\delta-n-m=0$, then $m=3$ and $n=9$, or $m=4$ and $n=8$. In this case, it is easy to obtain that $W(q)$ is a nonnegative constant. 
So in the following, we assume $\delta-n-m>0$. By Lemma~\ref{lem:polynomiality}, $W(q)=q^{\delta-n-m}W(q^{-1})$.
In consequence, in order to prove $W(q)\ge_q0$, it suffices to show that $[q^k]W(q)$ is nonnegative for $0\leq k\leq (\delta-n-m)/2$.
By (\ref{def+R}), we rewrite
$W(q)$ as
\begin{eqnarray}\label{W+rewrite}
 W(q)=\frac{R(q)}{1-q^m}
 +\sum_{\substack{d\mid r,d>1}}
   \mu(d)\frac{q^{L_d}R_d(q)}{1-q^m}.
\end{eqnarray}
By (\ref{Ld+posi}), we have $L_d\geq0$, where equality holds if and if $d=2$ and $m=4$ for every $d\mid r$ with $d>1$,
\begin{eqnarray}
L_d-\frac{(\delta-n-m)}{2}
 =\frac{m(d-2)(n-m)}{2d}\ge0
\end{eqnarray}
with equality if and only if $d=2$.
Since $R_d(0)=1$, the factor
$q^{L_d}R_d(q)/(1-q^m)$ has the lowest degree $L_d$.
Now we consider nonnegativity of $[q^k]W(q)$ by the following two cases:

\textbf{Case 1}: $0\le k<\frac{\delta-n-m}{2}$.
Combining (\ref{W+rewrite}) and Proposition~\ref{prop:R-positive} gives
\[
 [q^k]W(q)=\sum_{j\ge0}R_{k-jm}\ge0.
\]

\textbf{Case 2}: $k=\frac{\delta-n-m}{2}$.
If $d>2$, then $L_d-\frac{(\delta-n-m)}{2}>0$. Thus, in the second summation of (\ref{W+rewrite}), only the term with $d=2$ contributes in degree $L_2$.
Note $\mu(2)=-1$ and $R_2(0)=1$. It follows from (\ref{W+rewrite}) and Proposition~\ref{prop:R-positive} that
\begin{equation}\label{eq:midpoint-compensation}
 [q^{L_2}]W(q)=\begin{cases}
    \sum_{j\ge0}R_{L_2-jm}\geq0,&2\nmid r,\\[2pt]
    \displaystyle \sum_{j\ge0}R_{L_2-jm}-1\ge R_{L_2}-1,&2\mid r.
  \end{cases} 
\end{equation}
In the following we will prove $R_{L_2}\geq1$.

 We first consider the case $n-m>6$. Note for every $d\mid r$ with $d\ge3$ that
\[
 \begin{aligned}
 L_2-\deg R_d
 =m(n-m)\left(\frac12-\frac1d\right)-m\ge m\left(\frac{n-m}{6}-1\right)>0.
 \end{aligned}
\]
So the polynomial $R_d(q)$ has no the term $q^{L_2}$ for $d\geq3$.
It follows that $$R_{L_2}=[q^{L_2}]R(q)=[q^{L_2}]R_2(q)=[q^{L_2}]\left.\begin{bmatrix}n/2-1\\ m/2-1\end{bmatrix}\right|_{q\rightarrow q^2}=[q^{L_2/2}]\qbinom{n/2-1}{m/2-1}.$$
 By $d=2$, we have $m,n-m\in2\Z$, which implies $L_2\in 2\Z_{\ge0}.$ Moreover, we also have
$ 0\le\frac{L_2}{2} \le\deg \qbinom{n/2-1}{m/2-1}.$
In consequence, we obtain
\[
R_{L_2}=[q^{L_2/2}]\qbinom{n/2-1}{m/2-1}\ge1
\] since the coefficients of Gaussian polynomials are positive integers.

If $n-m\le6$, then $4\le m\le n-m\le6$ and both $m$ and $n-m$
are even. Thus $(n,m)\in\{(8,4),(10,4),(12,6)\}$, and direct
coefficient extraction gives
\[
 \begin{array}{c|c|c|c}
 (n,m)&R(q)&L_2&r_{L_2}\\ \hline
  (8,4)&[3]_{q^2}&0&1\\
  (10,4)&[4]_{q^2}&2&1\\
  (12,6)&\qbinomd{5}{2}{2}+[3]_{q^3}-1&6&3
 \end{array}
\]
In consequence, we have $R_{L_2}\ge1$ and
\eqref{eq:midpoint-compensation} is nonnegative. The proof is complete.
\end{proof}

\section{Proof of main results}
\label{sec:completion}
In this section, we will present the proofs of Theorem \ref{thm:strong} and Theorem \ref{thm:main}.
Before them, we first present a result for $G_{n,m}(q)$ for $m$ and $n$ being coprime (see Andrews \cite[Theorem~2]{Andrews} and
Haiman \cite[Section~2.5]{Haiman}). We give a self-contained proof as follows. 

\begin{lem}\label{lem:coprime}
Let $n$ and $m$ be integers with $0<m<n$ and $\gcd(n,m)=1$.
Then $G_{n,m}(q)\ge_q0$.
\end{lem}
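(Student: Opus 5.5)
Since $\gcd(n,m)=1$, I will show that $[m]_q$ divides $P(q)=\qbinom{n-1}{m-1}$ in $\Z[q]$, that the quotient $T(q):=P(q)/[m]_q$ has nonnegative coefficients, and then write
\[
G_{n,m}(q)=\frac{T(q)}{1-q^{n+1}}
\]
by \eqref{eq:Gnm}. Since $\frac{1}{1-q^{n+1}}\ge_q0$, this gives $G_{n,m}(q)\ge_q0$.

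\emph{Divisibility.} Let $\zeta$ be a root of $[m]_q$, so $\zeta$ is a primitive $c$-th root of unity with $c>1$ and $c\mid m$. Coprimality gives $c\nmid n$, so Lemma~\ref{lem:root-evaluation} (with $A=n$, $B=m$, $r=c$) yields $P(\zeta)=0$. The roots of $[m]_q$ are simple and $[m]_q$ is monic, so $T(q)\in\Z[q]$.

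\emph{Nonnegativity.} I will use the identity
\[
T(q)=\frac{1}{[m]_q}\qbinom{n-1}{m-1}=\frac{1}{[n]_q}\qbinom{n}{m},
\]
which is the $q$-rational Catalan number. A direct route to positivity is the cyclic lemma. The polynomial $\qbinom{n}{m}$ counts $0/1$ words with $m$ ones and $n-m$ zeros by inversions. Because $\gcd(n,m)=1$, the cyclic group $\Z/n$ acts freely on these words. I would then find a statistic on each orbit whose values are distributed as $1,q,\dots,q^{n-1}$ times a fixed monomial.

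I expect this statistic to be the main obstacle, since inversions are not simply shifted by rotation. What I would try first is to avoid the combinatorics: prove that the coefficients of $T$ are nonnegative directly from symmetry and unimodality. The polynomial $P(q)$ is symmetric about $\delta/2$ and, by Lemma~\ref{lem:kernel}, unimodal. But dividing a symmetric unimodal polynomial by $[m]_q$ need not give a nonnegative quotient, so this alone does not suffice.

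The fallback is the standard proof of Andrews \cite[Theorem~2]{Andrews}. By Lemma~\ref{lem:kernel}, it reduces to showing $[q^k]T(q)\ge0$ through the expansion
\[
T_k=\sum_{j\ge0}\Bigl([q^{k-jm}]P(q)-[q^{k-jm-1}]P(q)\Bigr)=\sum_{j\ge0}C_{k-jm},
\]
where $C_i=[q^i](1-q)P(q)$. This uses $T=(1-q)P/(1-q^m)$ as a power series. For $k\le\delta/2$ every term $C_{k-jm}$ is nonnegative by Lemma~\ref{lem:kernel}. For $k>\delta/2$ I would use the symmetry $q^{\delta-m+1}T(q^{-1})=T(q)$, which follows from the palindromicity of $P$ and of $[m]_q$, to reflect to $k'=\delta-m+1-k\le\delta/2$.

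This argument is self-contained and mirrors Case~1 and Case~2 of Proposition~\ref{prop:reduction} with $R=0$, so I will commit to it.
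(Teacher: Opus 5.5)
Your proposal is correct and follows essentially the paper's proof. You get the divisibility of $P(q)$ by $\qint m$ from Lemma~\ref{lem:root-evaluation} at the nontrivial $m$th roots of unity, show $[q^k]T(q)=\sum_{j\ge0}C_{k-jm}\ge0$ in the lower range via Lemma~\ref{lem:kernel}, and handle the upper range by the palindromicity of $T$ (of degree $\delta-m+1=(m-1)(n-m-1)$). The one detail to add is that $m=1$ falls outside Lemma~\ref{lem:kernel}, since $b=m-1=0$; it is trivial there because $T=1$, and the paper treats it separately.
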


\begin{proof}
For $m=1$, by (\ref{def+Fnm+Gnm}), we have
$G_{n,1}(q)=1/(1-q^{n+1})$, whose coefficients are obviously nonnegative.
Suppose $m>1$. By
$P(q)=\qbinom{n-1}{m-1}$, we define a rational function
\begin{eqnarray}
 B(q)=\frac{P(q)}{\qint m}.
\end{eqnarray}
It follows from (\ref{def+Fnm+Gnm}) that
\begin{eqnarray}
  G_{n,m}(q)=\frac{B(q)}{1-q^{n+1}}.
\end{eqnarray}

So, in order to prove that $G_{n,m}(q)\ge_q0$, it suffices to show $B(q)\ge_q0$. In what follows we will prove that $B(q)$ is a polynomial with only nonnegative coefficients.

If $\zeta^m=1$ and $\zeta\ne1$, then the condition $\gcd(n,m)=1$ implies that the order of $\zeta$ divides
$m$ but not $n$. By Lemma~\ref{lem:root-evaluation},
$P(\zeta)=0$. These roots are precisely simple of $\qint m$.
Thus we obtain $\qint m\mid P(q)$ and $B(q)\in\Z[q]$.

Easy computations give $B(0)=1$,
\begin{eqnarray}
 \deg B(q)=(m-1)(n-m-1)\ge0,
 \qquad q^{\deg B(q)}B(q^{-1})=B(q).
\end{eqnarray}
Thus, in order to prove that all coefficients of $B(q)$ are nonnegative, we only need to show that $[q^k]B(q)$ is nonnegative for $0\le k\le\deg B(q)/2$.
Since $\qint m=(1-q^m)/(1-q)$, we have
\begin{eqnarray}\label{coeff+B}
[q^k]B(q)=[q^k]\frac{(1-q)P(q)}{1-q^m}
 =\sum_{j\ge0}[q^{k-jm}](1-q)P(q)
\end{eqnarray}
for $k\ge0$.
For $0\le k\le\deg B(q)/2$, each nonnegative index in this sum satisfies
\begin{eqnarray}
 0\le k-jm\le k\le\frac{\deg B(q)}{2}
 \le\frac{(m-1)(n-m)}{2}=\frac{\deg P(q)}{2}.
\end{eqnarray}

Hence, applying Lemma~\ref{lem:kernel} with $a=n-m$ and $b=m-1$ to (\ref{coeff+B}) yields nonnegativity of each term. So $[q^k]B(q)$ is nonnegative for $0\le k\le\deg B(q)/2$. We complete the proof.
\end{proof}

\begin{proof}[\textbf{Proof of Theorem ~\ref{thm:strong}}]
From symmetry of Gaussian polynomials, we have $G_{n,m}(q)=G_{n,n-m}(q)$. So it suffices to
consider $3\le m\le n/2$.

If $\gcd(n,m)=1$, then
$G_{n,m}(q)\ge_q0$ by Lemma~\ref{lem:coprime}.
So let $r=\gcd(n,m)>1$. By Proposition~\ref{prop:reduction}, in order to prove $G_{n,m}(q)\ge_q0$, it suffices to show
\[
  R(q)\ge_q0,\quad W(q)\ge_q0\quad and \quad C_k-R_k\ge0\quad(0\le k\le\delta/2).
\]  The first two nonnegativity $R(q)\ge_q0$ and $W(q)\ge_q0$ follows from Propositions~\ref{prop:R-positive}
and~\ref{prop:W-positive}, respectively. For the third nonnegativity, it follows from Proposition~\ref{prop:domination} Propostion \ref{prop:domination+m6+2+3} and Proposition~\ref{prop:domination+m6+12}. In consequence, we obtain $G_{n,m}(q)\ge_q0$.
\end{proof}

\begin{proof}[\textbf{Proof of Theorem~\ref{thm:main}}]
For $m\ge2$, by \eqref{eq:f-to-g}, we have
\[
 F_{n,m}(q)=G_{n,m}(q)
 \prod_{j=n+2}^{n+m-1}\frac1{1-q^j},
 \qquad
 \frac1{1-q^j}=\sum_{i\ge0}q^{ij}\ge0\qquad(j\ge1).
\]
Hence, if $3\le m\le n-3$, we have $ F_{n,m}(q)\ge_q0$ by Theorem~\ref{thm:strong}. The following is the boundary cases:

(i) If $m=n-1\ge2$, then $\gcd(n,m)=1$, so
$F_{n,m}(q)\ge_q0$ by Lemma~\ref{lem:coprime}. 

(ii) If $m=1$, then obviously $F_{n,1}(q)=1$.

(iii) If $m=2$ and $n$ is odd, then direct cancellation yields
\begin{equation}\label{eq:m2}
 F_{n,2}(q)=G_{n,2}(q)
 =\frac{1-q^{n-1}}{(1-q^2)(1-q^{n+1})}
\end{equation}
whose coefficients are nonnegative by Lemma~\ref{lem:coprime}.

(iv) If $m=n-2$ and $n$ is odd, then $ F_{n,n-2}(q)=G_{n,n-2}(q)=G_{n,2}(q)$. Its coefficients are nonnegative by (iii).
This completes the proof of Theorem \ref{thm:main}.
\end{proof}

\begin{rem}
The exclusions for even \(n\) are necessary. For even \(n\geq4\), by \eqref{eq:m2}, we have $$[q^{n-1}]F_{n,2}(q)=[q^{n-1}]F_{n,n-2}(q)=[q^{n-1}]=G_{n,2}(q).$$
\end{rem}



\begin{thebibliography}{99}

\bibitem{And93}
G. E. Andrews, On a conjecture of Peter Borwein, J. Symbolic Comput., 20 (1995), 487--501.
Symbolic computation in combinatorics (Ithaca, NY, 1993).


\bibitem{Andrews}
G.~E.~Andrews,
\emph{The Friedman--Joichi--Stanton monotonicity conjecture at primes},
in \emph{Unusual Applications of Number Theory} (M.~B.~Nathanson, ed.),
DIMACS Ser. Discrete Math. Theoret. Comput. Sci., vol.~64,
Amer. Math. Soc., Providence, RI, 2004, pp.~9--15.


\bibitem{ChenJia2022}
W. Y. C. Chen, I. D. D. Jia,
Semi-invariants of binary forms and Sylvester's theorem,
\emph{Ramanujan J.} \textbf{59} (2022), no.~1, 297--311.

\bibitem{FJS1994}
J. Friedman, J. T. Joichi, D. Stanton,
More monotonicity theorems for partitions,
\emph{Exp. Math.} \textbf{3} (1994), no.~1, 31--37.

\bibitem{Haiman}
M.~D.~Haiman,
\emph{Conjectures on the quotient ring by diagonal invariants},
J.~Algebraic Combin. \textbf{3} (1994), no.~1, 17--76.


\bibitem{Han}
G. Han, personal communication, 2002.


\bibitem{OHara1990}
K. M. O'Hara,
Unimodality of Gaussian coefficients: a constructive proof,
\emph{J. Combin. Theory Ser. A} \textbf{53} (1990), no.~1, 29--52.



\bibitem{PakPanova2014}
I. Pak, G. Panova,
Unimodality via Kronecker products,
\emph{J. Algebraic Combin.} \textbf{40} (2014), no.~4, 1103--1120.


\bibitem{PakPanova2017}
I. Pak, G. Panova,
Bounds on certain classes of Kronecker and $q$-binomial coefficients,
\emph{J. Combin. Theory Ser. A} \textbf{147} (2017), 1--17.


\bibitem{PakPanovaVallejo2016}
I. Pak, G. Panova, E. Vallejo,
Kronecker products, characters, partitions, and the tensor square conjectures,
\emph{Adv. Math.} \textbf{288} (2016), 702--731.

\bibitem{PS2003}
T. Prellberg, D. Stanton,
Proof of a monotonicity conjecture,
\emph{J. Combin. Theory Ser. A} \textbf{103} (2003), no.~2, 377--381.


\bibitem{Stanley1980}
R. P. Stanley,
Weyl groups, the hard Lefschetz theorem, and the Sperner property,
\emph{SIAM J. Algebraic Discrete Methods} \textbf{1} (1980), no.~2, 168--184.

\bibitem{StanleyEC1}
R. P. Stanley,
\emph{Enumerative Combinatorics}, Vol.~1, second ed.,
Cambridge University Press, Cambridge, 2012.

\bibitem{Wang2022}
C. Wang,
An analytic proof of the Borwein Conjecture,
\emph{Adv. Math.} \textbf{394} (2022), Paper No.~108028.
\bibitem{Zanello2015}
F. Zanello,
Zeilberger's KOH theorem and the strict unimodality of $q$-binomial coefficients,
\emph{Proc. Amer. Math. Soc.} \textbf{143} (2015), no.~7, 2795--2799.

\bibitem{Zeilberger1989}
D. Zeilberger,
Kathy O'Hara's constructive proof of the unimodality of the Gaussian polynomials,
\emph{Amer. Math. Monthly} \textbf{96} (1989), no.~7, 590--602.




\end{thebibliography}
\end{document}